\newtheorem{theorem}{Theorem}[section]
\newtheorem{lemma}[theorem]{Lemma}
\newtheorem{proposition}[theorem]{Proposition}
\theoremstyle{definition}
\newtheorem{definition}[theorem]{Definition}
\newtheorem{example}[theorem]{Example}
\theoremstyle{remark}
\newtheorem{remark}[theorem]{Remark}
\numberwithin{equation}{section}
\begin{document}

\title{Torsors and the Quillen-Barr-Beck cohomology for restricted Lie algebras}

%    Information for first author
\author{Ioannis Dokas}
%    Address of record for the research reported here
%\address{Department of Mathematics, Louisiana State University, Baton
%Rouge, Louisiana 70803}
%    Current address
%\curraddr{Department of Mathematics and Statistics,
%Case Western Reserve University, Cleveland, Ohio 43403}
%\email{xyz@math.university.edu}
%    \thanks will become a 1st page footnote.
%\thanks{The first author was supported in part by NSF Grant \#000000.}

%    Information for second author
%\author{Author Two}
%\address{Mathematical Research Section, School of Mathematical Sciences,
%Australian National University, Canberra ACT 2601, Australia}
\email{yiannis.dokas@gmail.com}
\thanks{I would like to express my appreciation to the Isaac Newton Institute for the excellent working conditions that I enjoyed there during the programme
``Grothendieck-Teichm\``{u}ller groups, Deformations and Operads''.}

%    General info
\subjclass[2000]{Primary 18C15,18G30, 17B55, 17B50}

\date{}

%\dedicatory{This paper is dedicated }

\keywords{Restricted Lie algebra, Quillen-Barr-Beck cohomology,
Torsors, crossed modules}

\begin{abstract}
Given an exact sequence of restricted Lie algebras using Duskin's
torsors theory, we establish an eight term exact sequence for
Quillen-Barr-Beck cohomology of restricted Lie algebras. As an
application we obtain for any extension of algebraic groups over an
algebraic closed field of prime characteristic an eight term exact
sequence for the corresponding restricted Lie algebras extension.
\end{abstract}

\maketitle

%% The correct journal style for \specialsection is all uppercase; a known bug
%% in amsart.cls prevents this, so input must be uppercase until it is fixed.
%\specialsection*{This is a Special Section Head}

%%%%%%%%%%%%%%%%%%%%%%%%%%%%%%%%%%%%%%%%%%%%%%%%%%%%%%%%%%%%%%%%%%%%%%%%
%\footnote{Here is an example of a footnote. Notice that this
%footnote text is running on so that it can stand as an example of
%how a footnote with separate paragraphs should be written.
%\par
%And here is the beginning of the second paragraph.}%
%%%%%%%%%%%%%%%%%%%%%%%%%%%%%%%%%%%%%%%%%%%%%%%%%%%%%%%%%%%%%%%%%%%%%%%%
\section{Introduction}

Let $0\rightarrow H \rightarrow G \rightarrow  A\rightarrow 0$ be an
extension of commutative algebraic groups over an algebraic closed
field. If $B$ is another
 commutative algebraic group, then J.-P. Serre in \cite{Ser} and M. Rosenlicht in \cite{Ros} proved that there is an exact  sequence
\begin{multline*}
0\rightarrow Hom(A,B) \rightarrow Hom(G,B) \rightarrow Hom(H,B)\rightarrow\\
\rightarrow Ext(A,B) \rightarrow Ext(G,B)\rightarrow Ext(H,B)
\end{multline*}

If the ground field has prime  characteristic then the extension of
commutative algebraic groups induces a short exact sequence of
abelian restricted Lie algebras. In this paper we associate to this
exact sequence of abelian restricted algebras, an eight term exact
sequence for the Quillen-Barr-Beck cohomology of restricted Lie
algebras. Using Duskin's torsors theory we construct in Theorem
$4.4$, for any short exact sequence $0\rightarrow N \rightarrow
\mathbf{g} \rightarrow \mathbf{b} \rightarrow 0$ of restricted Lie
algebras, an eight term exact sequence for Quillen-Barr-Beck
cohomology with coefficients any Beck $\mathbf{b}$-module.

Here we use the cohomology defined in \cite{dokas} following the
general scheme of Quillen-Barr-Beck cohomology theory for universal
algebras (see \cite{BB}, \cite{Quillen2}, \cite{Quillen}). In
contrast to the cases of Groups, Associative algebras, Lie algebras,
where we obtain the classical cohomologies theories, we do not
obtain Hochschild (co)-homology  for restricted Lie algebras
\cite{Hoch}. We obtain different cohomology which classifies more
general abelian extensions of restricted Lie algebras and not
strongly abelian extensions which are classified by Hochschild
cohomology.

The Hochschild-Serre spectral sequence for (co)-homology in various
algebraic categories  gives rise to exact sequences of terms of low
degree. For the category of restricted Lie algebras it is proved by
Eckmann and Stammbach in \cite{ES} that there is a five-term exact
sequence for Hochschild (co)-homology  of restricted Lie algebras.
In Quillen-Barr-Beck cohomology this is replaced by the eight-term
exact sequence of Theorem $4.4$.

We now give an outline of the methods used in our proof. Duskin (cf.
\cite{duskin}), develops the theory of $n$-torsors in order to give
an interpretation of the $n$-th cotriple cohomology group. Later,
Glenn in \cite{Glenn} defines torsor cohomology groups and gives a
slightly different definition of $n$-th torsor which coincides with
the one given by Duskin in the context of a monadic category over
$\rm{Sets}$ whose objects have an underlying group structure (cf.
Corollary $7.2.4$ in \cite{Glenn}). Cegarra and Aznar in
\cite{CegAz} proved  in the framework of a Barr-exact category that
there is an eight term exact sequence for torsor cohomology. We use
this result in order to obtain exact sequences for Quillen-Barr-Beck
cohomology for restricted Lie algebras.

In Lemma $2.1$ of Section $2$ we compute the $0$-torsors group for
the category of restricted Lie algebras. Thus, we obtain a $5$-term
exact sequence for Quillen-Barr-Beck cohomology of restricted Lie
algebras.

In section $3$ in order to study $2$-torsors groups we are led to
consider crossed modules and internal groupoids in the category of
restricted Lie algebras. In Theorem $3.3$ is obtained that the
notions of crossed modules and internal groupoids are equivalent.

In section $4$ we study $2$-fold extensions for the category of
restricted Lie algebras. Two fold extensions in certain categories
of interest are studied by M. Gerstenhaber, in \cite{Ger} and is
given cohomological interpretation. Besides, J.-L. Loday and C.
Kassel in \cite{Loday2} consider $2$-fold extensions of Lie algebras
in connection to the study of relative cohomology of Lie algebras.
In Lemma $4.2$ we prove that the $2$-torsors cohomology group is
isomorphic to the group of equivalence classes of $2$-fold
extensions of restricted Lie algebras. Thus we obtain an
interpretation of the second Quillen-Barr-Beck cohomology group in
terms of $2$-fold extensions. As a consequence using Cegarra-Aznar's
eight term exact sequence we establish in Theorem $4.4$ an
eight-term exact sequence for Quilen-Barr-Beck cohomology of
restricted Lie algebras which extends the $5$-term exact sequence of
Section $2$. In the last subsection $4.4$ we apply Theorem $4.4$ to
extensions of algebraic groups in prime characteristic.

\subsection{Restricted Lie algebras}

In modular Lie theory in order to extent theorems which are valid in
characteristic zero we are led to consider restricted  Lie algebras
introduced by N. Jacobson in \cite{Jac}. Let $k$ denote a field of characteristic $p\neq 0$ and $\rm{Lie}$ the category of Lie algebras over $k$.
\begin{definition}
A \textit{restricted Lie algebra} $(L,(-)^{[p_{L}]})$ over $k$ is a
Lie algebra $L \in \rm{Lie}$ together with a map $(-)^{[p_{L}]}: L
\rightarrow L$ called the $p$-map such that the following relations
hold:

\begin{align}
(\alpha x)^{[p_{L}]} &=  \alpha^{p}\;x^{[p_{L}]}\\
[x,y^{[p_{L}]}]     &= [x,\underbrace{y],y],\cdots,y}_{p}] \\
(x+y)^{[p_{L}]} &=
x^{[p_{L}]}+y^{[p_{L}]}+\sum_{i=1}^{p-1}s_{i}(x,y)
\end{align}
where $i s_{i}(x,y)$ is the coefficient of $\lambda^{i-1}$ in
$ad^{p-1}_{\lambda x+y}(x)$, where $ad_{x}: L\rightarrow L$ denotes
the adjoint representation given by $ad_{x}(y):=[y,x]$ and $x,y \in
L,\; \alpha \in k$. A Lie algebra homomorphism $f: L\rightarrow L'$
is called restricted if $f(x^{[p]})=f(x)^{[p]}$. We denote by
$\rm{RLie}$ the category of restricted Lie algebras over $k$.

\begin{remark}
Let $L \in \rm{RLie}$ be a restricted Lie algebra and $x,y\in L$. If $L_{x,y}$ is the Lie algebra generated by the elements $x,y$ then $s_{i}(x,y)\in L_{x,y}^{p}$ where $L^{1}_{x,y}:=L_{x,y}$ and $$L_{x,y}^{n}:=[L_{x,y}^{n-1},L_{x,y}]$$ for  $0\leq  i \leq p-1$.
\end{remark}

\begin{remark}
There is a notion of free restricted Lie algebra over a set.
Therefore, the category $\rm{RLie}$ of restricted Lie algebras is a
monadic category over $\rm{Sets}$. It follows that $\rm{RLie}$ is a
Barr exact category.
\end{remark}

Let $L$ and $L'$ be restricted Lie algebras. The direct product of $L$ and $L'$ is as a Lie algebra their direct product in the category
of Lie algebras $$L\times L'= \{ (x,y), : x\in L, y\in L' \}$$  equipped with the $p$-map given by $(x,y)^{[p]}:=(x^{[p]},y^{[p]})$ for all $(x,y)\in L\times L'$. If $f: L\rightarrow R$ and $f':L'\rightarrow R$ are restricted Lie homomorphisms, then the pullback of $L$ and $L'$ over $R$ is given by the following commutative diagram in $\rm{RLie}$
\[
\begin{CD}
  L\times_{R} L' @>pr'>> L' \\
  @VVprV @VVf'V \\
  L @>f>> R
\end{CD}
\]
where $$L\times_{R}L':=\{(x,y)\in L\times L', : f(x)=f'(y)\}$$

Let $L\in \rm{RLie}$ be a restricted Lie algebra and $U(L)$ its
enveloping algebra. We denote by $u(L):=U(L)/<x^{p}-x^{[p]}>,\;x\in
L$ the restricted enveloping algebra of $L$. A $L$-module $A$ is
called \textit{restricted} if $x^{[p]}a=(\underbrace{x\cdots
(x(x}_{p}a)\cdots)$. The category of restricted Lie $L$-modules is
equivalent to the category of $u(L)$-modules.
\end{definition}

\begin{example}
Let $\mathfrak{A}$ be any associative algebra over a field $k$ with
characteristic $p\neq 0$. We denote by $\mathfrak{A}_{Lie}$ the induced Lie
algebra with the bracket given by $[x,y]:=xy-yx$, for all $x,y\in
\frak{A}$. Then $(\mathfrak{A},(-)^{p})$ is a restricted Lie algebra where $(-)^{p}$
is the Frobenious map given by $x\mapsto x^{p}$. Thus, there is a
functor $(-)_{\rm{RLie}}: \rm{As} \rightarrow \rm{RLie}$ from the
category of associative algebras to the category of restricted Lie
algebras.
\end{example}

\begin{example}
Let $\mathfrak{A}$ be an associative algebra over $k$. Then
$\mathfrak{gl}_{n}(\mathfrak{A})$ the Lie algebra of $n\times n$ matrices with
coefficients in $\mathfrak{A}$ is a restricted Lie algebra. Moreover, one
proves that $\mathfrak{sl}_{n}(\mathfrak{A})$ is a restricted Lie subalgebra of
$\mathfrak{gl}_{n}(\mathfrak{A})$.
\end{example}

\begin{example}
Let $V$ be a $k$-vector space then a map $f: V\rightarrow V$ such that $f(x+y)=f(x)+f(y)$ and $f(\alpha x)=\alpha^{p}f(x)$ for all $x,y \in V$ and $\alpha \in k$
is called $p$-semi-linear. Any pair $(V,f)$ where $V$ is $k$-vector space and $f: V\rightarrow V$  a $p$-semi-linear map is an abelian restricted Lie algebra.
\end{example}

\begin{example}
Let $\mathfrak{B}$ be an $k$-algebra not necessarily associative
then the set of $k$-derivations $Der(\mathfrak{B})$ is endowed with
the structure of restricted Lie algebra. In particular, if $D\in
Der(\mathfrak{B})$ by the Leibniz formula we one has
$$D^{p}(xy)=\sum_{i=0}^{i=p}\binom{p}{i}D^{i}(x)D^{p-i}(y)$$ for all
$x,y\in \mathfrak{B}$. Since the $char\;k=p$ we have
$\binom{p}{i}=0$ for $1\leq i \leq p-1$, therefore $D^{p}\in
Der(\mathfrak{B})$.
\end{example}
Let $(L,[p]) \in \rm{RLie}$ be a restricted Lie algebra then a derivation $D\in Der(L)$ is called restricted
derivation if $D(x^{[p]})=ad^{p-1}(x)(D(x))$ for all $x \in L$.

\begin{example}
Let $\rm{G}$ be an algebraic group over $k$. The associated Lie
algebra $Lie(\rm{G})$ of $\rm{G}$ is endowed with the structure of
restricted Lie algebra (cf. \cite{Bor}).
\end{example}

Let $(L,(-)^{[p]}), (N,(-)^{[p']})$ be two restricted Lie algebras, and $\eta: L \rightarrow Der(N)$ a restricted homomorphism such that $\eta (x)$ is a restricted derivation for every $x\in L$. We recall that the Lie product of the semi-direct product of $L$ and $N$ is given by
$$[(l,n),(l',n')]=([l,l'],\eta(l)n'-\eta(l')n+[n,n'])$$

 Then it follows from Jacobson's Theorem $11$ in \cite{Jac} that the semi-direct product of $L$ and $N$ is endowed with the a $p$-map extending the $p$-maps on $L$ and $N$ (cf. Theorem $2.5$ in \cite{Far}). We call this restricted Lie algebra the \textit{semi-direct product} of $L$ and $N$ in the category of restricted Lie algebras and we denote it by $L\rtimes N$.

Let us recall some definitions and results which we use in the next
section. We refer the reader to \cite{dokas} for details.

In his doctoral dissertation, Beck incorporates the various notions
of  module over enveloping algebras to the general notion of a Beck
module.

\subsection{Beck-modules for \rm{RLie}}
Let $L\in \rm{RLie}$ be a restricted Lie algebra, $A$ a restricted
$L$-module and $f: A \rightarrow A^{L}$ a $p$-semi-linear map from
$A$ to the invariants $A^{L}=\{a\in M : \text{$xa=0$ for all $x\in
L$}\}$. We denote by $L \times_{f} A$ the semi-direct product in
$\rm{RLie}$ of $L$ and $A$. In particular, $L \times_{f} A$ is the
semi-direct product in $Lie$ together with the $p$-map
$(l,a)^{[p]}=(l^{[p]},\underbrace{l\cdots l}_{p-1}a+f(a))$ where
  $x\in L$ and $a\in A$. For $L \in \rm{RLie}$ a restricted Lie algebra the next theorem
gives a characterization of the category of abelian group objects of
the slice category $ab(\rm{RLie}/L)$ i.e the category Beck
$L$-modules in $\rm{RLie}$.

\begin{theorem}
The category of abelian group objects $ab(\rm{RLie}/L)$ is
equivalent to the category $\mathcal{A}$ whose objects are pairs
$(A,f)$ where $A$ is a restricted $L$-module and $f: A \rightarrow
A^{L}$ is a $p$-semi-linear map from $A$ into its submodule of
invariants $A^{L}$ and whose morphisms $(A_{1},f_{1}) \rightarrow
(A_{2},f_{2})$ are $L$ homomorphisms $\alpha : A_{1} \rightarrow
A_{2}$ such that $f_{2} \alpha = \alpha f_{1}.$
\end{theorem}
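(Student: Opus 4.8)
The plan is to construct a functor in each direction and then verify that they are mutually quasi-inverse. Recall that an abelian group object in $\mathrm{RLie}/L$ is a map $\pi\colon E\to L$ in $\mathrm{RLie}$ equipped with a section $z\colon L\to E$, an addition $E\times_L E\to E$, and an inverse, all over $L$, satisfying the abelian group axioms internally. First I would take such an object $(\pi\colon E\to L, z, +)$ and set $A:=\ker\pi$. As in the classical Lie (or group) case, $A$ is an abelian ideal: the internal addition forces the bracket on $A$ to be trivial and forces the ``two multiplications agree'' (Eckmann--Hilton) phenomenon, so the Lie addition on $A$ coincides with the group-object addition. The restricted $L$-module structure on $A$ comes from the adjoint action of $E$ on $A$ restricted along $z$, i.e. $x\cdot a:=[z(x),a]$; one checks this descends to an action of $L$ and is restricted because $E$ is a restricted Lie algebra and $A$ is a $p$-ideal killed suitably. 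The extra datum $f\colon A\to A^{L}$ is extracted from the $p$-map of $E$: for $a\in A$, the element $a^{[p_E]}$ lies in $A$ (since $\pi(a)^{[p_L]}=0$), and I define $f(a):=a^{[p_E]}$; formula (1.1) for the $p$-map shows $f$ is $p$-semilinear, and the displayed $p$-map formula on a semidirect product $L\times_f A$ identifies exactly where $f$ must land — one shows $f(a)\in A^{L}$ using that $[z(x),a^{[p_E]}]=[z(x),a],a],\dots,a]=0$ by axiom (1.2) together with $[A,A]=0$, so $x\cdot f(a)=0$.

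Conversely, given $(A,f)\in\mathcal{A}$, I would send it to the object $L\times_f A\to L$ (projection), with the zero section $x\mapsto(x,0)$ and the evident fibrewise addition $(x,a)+(x,a')=(x,a+a')$; Jacobson's theorem (quoted in the excerpt) guarantees $L\times_f A$ really is a restricted Lie algebra with the stated $p$-map $(l,a)^{[p]}=(l^{[p_L]},l^{p-1}a+f(a))$, and the projection is a restricted homomorphism. One checks directly that this is an abelian group object in $\mathrm{RLie}/L$: the fibre is $A$ with its addition, the section is additive, and all axioms hold because $A$ is abelian. On morphisms, a map of pairs $\alpha\colon(A_1,f_1)\to(A_2,f_2)$ induces $\mathrm{id}_L\times\alpha\colon L\times_{f_1}A_1\to L\times_{f_2}A_2$, which is restricted precisely because $f_2\alpha=\alpha f_1$ — this compatibility is exactly what makes $(\mathrm{id},\alpha)$ commute with the second coordinate of the $p$-map.

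Then I would show the two functors are quasi-inverse. Starting from $(A,f)$, forming $L\times_f A$ and taking the kernel of the projection recovers $A$ as an $L$-module, and the induced $p$-semilinear map is $a\mapsto(0,a)^{[p]}=(0,f(a))$, i.e. $f$ again; naturality is immediate. Starting from an abelian group object $\pi\colon E\to L$ with section $z$, the section splits the exact sequence $0\to A\to E\xrightarrow{\pi} L\to 0$ of restricted Lie algebras — here one uses that, $A$ being an abelian $p$-ideal with the module structure as above, the splitting $E\cong L\times_f A$ as restricted Lie algebras holds, with $f$ the map just described. Naturality of this isomorphism in $E$ finishes the equivalence.

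The main obstacle, and the place that genuinely uses the restricted structure rather than being a transcription of the ordinary Lie-algebra computation, is the bookkeeping around the $p$-map: verifying that $f(a)=a^{[p_E]}$ actually lands in the invariants $A^{L}$, that it is $p$-semilinear (which needs the nonlinear correction terms $s_i(x,y)$ in (1.3) to vanish on $A$ because $[A,A]=0$ and, by the first Remark, $s_i(a,a')\in L_{a,a'}^{p}\subseteq[A,A]=0$), and that the splitting $E\cong L\times_f A$ is compatible with $p$-maps on the nose. Once the interplay of axioms (1.1)--(1.3) with the condition $[A,A]=0$ is pinned down, the rest is the standard Eckmann--Hilton / semidirect-product argument carried over from $\mathrm{Lie}$.
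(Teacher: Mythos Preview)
Your argument is correct, and in fact it supplies considerably more than the paper does: the paper's ``proof'' of this theorem is nothing but a citation to Theorem~1.7 of the author's earlier article \cite{dokas}, so there is no in-paper argument to compare against. The route you take---extract $A=\ker\pi$ with the $L$-action via the zero section, use Eckmann--Hilton to get $[A,A]=0$, read off $f$ as the restriction of the $p$-map and use Remark~1.2 together with $[A,A]=0$ to see that the correction terms $s_i$ vanish so that $f$ is $p$-semilinear with values in $A^L$, and in the other direction build $L\times_f A$ and check the two constructions are quasi-inverse---is exactly the standard one, and is what the cited reference carries out. The one place you are slightly terse is the verification that the bijection $E\cong L\times_f A$ respects the $p$-maps globally (not just on $z(L)$ and on $A$ separately); this follows because any element of $E$ is a sum $z(\pi e)+a_e$, the $s_i$ in formula~(1.3) depend only on the Lie structure and are therefore preserved by the Lie isomorphism, and the $p$-map is already matched on each summand---but you correctly flag this as the point needing care.
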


\begin{proof}
See Theorem $1.7$ in \cite{dokas}.
\end{proof}

Let $R_{f}$ be the polynomial ring consisting of the set of
polynomials $\sum_{i=0}^{i=m}a_{i}f^{i}$ where $a_{i} \in k$, $f$ an
indeterminate and $fa=a^{p}f$. We denote by $w(L)$ the ring which as
$k$-vector space is $w(L):=R_{f} \otimes _{k} u(L)$ and such that
$R_{f}\rightarrow w(L)$ and $u_{L}\rightarrow w(L)$ are algebra
homomorphisms and
$$(P\otimes 1)(1\otimes l):=P\otimes l \;\text{and}\; (l\otimes 1) (P\otimes 1):=0$$
for all $P\in R_{f}$ and $l\in L$. The following theorem gives
another characterization of the category of abelian group objects
$ab(\rm{RLie}/L)$.

\begin{theorem}
The category of abelian group objects
$ab(\rm{RLie}/\boldsymbol{L})$ is equivalent to the category of
$w(\boldsymbol{L})$-modules.
\end{theorem}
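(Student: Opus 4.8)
The plan is to compose the equivalence of Theorem 1.1 — which identifies $ab(\mathrm{RLie}/L)$ with the category $\mathcal{A}$ of pairs $(A,f)$, where $A$ is a restricted $L$-module and $f\colon A\to A^{L}$ is $p$-semi-linear — with an equivalence between $\mathcal{A}$ and the category of $w(L)$-modules. So the real content is to show that the data of a pair $(A,f)$ is exactly the data of a module over the ring $w(L)=R_{f}\otimes_{k}u(L)$ described above. First I would unwind what a $w(L)$-module structure on a $k$-vector space $A$ amounts to: since $u(L)\to w(L)$ is an algebra map, $A$ becomes a $u(L)$-module, i.e.\ a restricted $L$-module; since $R_{f}\to w(L)$ is an algebra map, the indeterminate $f$ acts as a $k$-linear endomorphism of $A$ satisfying $f(\alpha a)=\alpha^{p}f(a)$ (this is forced by the relation $f\alpha=\alpha^{p}f$ in $R_{f}$, reading $\alpha\in k$ as a scalar), so $f\colon A\to A$ is $p$-semi-linear. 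It remains to extract from the cross relations $(P\otimes 1)(1\otimes l)=P\otimes l$ and $(l\otimes 1)(P\otimes 1)=0$ the statement that $f$ lands in the invariants $A^{L}$ and interacts correctly with the $L$-action; concretely, $l\cdot(f a)=(l\otimes 1)(f\otimes 1)a=0$ for all $l\in L$, so $fA\subseteq A^{L}$, which is precisely the last piece of the datum of an object of $\mathcal{A}$.

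Next I would go the other way: given $(A,f)\in\mathcal{A}$, define an action of $w(L)$ on $A$ by letting $\sum_{i}(P_{i}\otimes l_{i})$ act as $\sum_{i} P_{i}(f)\circ \ell_{i}$, where $\ell_{i}$ denotes the action of $l_{i}\in u(L)$; one checks this is well defined (the defining relations of $w(L)$ are respected because $fA\subseteq A^{L}$ kills the ``wrong-order'' products) and associative, giving a genuine $w(L)$-module. On morphisms the correspondence is immediate: an $L$-homomorphism $\alpha\colon A_{1}\to A_{2}$ with $f_{2}\alpha=\alpha f_{1}$ is exactly a map commuting with the action of both tensor factors, hence a $w(L)$-module homomorphism, and conversely. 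These two assignments are visibly mutually inverse on objects and morphisms, so they constitute an equivalence (in fact an isomorphism) of categories $\mathcal{A}\simeq w(L)\text{-}\mathrm{Mod}$; composing with Theorem 1.1 yields the claim.

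The step I expect to be the main obstacle is verifying that the ring $w(L)$ as defined — the $k$-vector space $R_{f}\otimes_{k}u(L)$ with the prescribed, deliberately non-symmetric multiplication rules — is actually an associative unital ring, and that the action formula above is well defined with respect to those rules rather than merely plausible. One has to be careful that the relation $(l\otimes 1)(P\otimes 1)=0$ together with $(P\otimes 1)(1\otimes l)=P\otimes l$ does not collapse the ring or create inconsistencies when multiplying three or more elementary tensors, and that the induced $f$-twisted action $P(f)\circ\ell$ is compatible with the restricted-module axiom $x^{[p]}a=(x\cdots(x(xa))\cdots)$ already built into the $u(L)$-action. Once associativity of $w(L)$ and well-definedness of the action are pinned down, the remaining bookkeeping — checking functoriality, and that the two constructions invert each other — is routine. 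I would likely isolate the associativity of $w(L)$ as a short preliminary computation (or cite it from \cite{dokas}), and then present the equivalence $\mathcal{A}\simeq w(L)\text{-}\mathrm{Mod}$ cleanly on top of it.
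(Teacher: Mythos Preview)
The paper does not actually prove this statement: its entire proof is the sentence ``See Theorem~$1.8$ in \cite{dokas}.'' So there is no in-paper argument to compare your proposal against. Your plan---compose the equivalence $ab(\mathrm{RLie}/L)\simeq\mathcal{A}$ of the preceding theorem with a direct identification $\mathcal{A}\simeq w(L)\text{-}\mathrm{Mod}$ obtained by reading off from the defining relations of $w(L)$ that a module is exactly a restricted $L$-module $A$ together with a $p$-semi-linear $f$ whose image is annihilated by $L$---is the natural argument and is correct; it is presumably also what the cited reference does (cf.\ Remark~1.11 in the present paper, which records precisely this translation). Your flagged obstacle, namely checking that the somewhat tersely described multiplication on $R_{f}\otimes_{k}u(L)$ really is associative and that the induced action is well defined, is the only point requiring care; once that is pinned down the rest is bookkeeping, as you say.
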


\begin{proof}
See Theorem $1.8$ in \cite{dokas}.
\end{proof}

\begin{remark}
By Theorems $1.9$, $1.10$ we note that if $L\in \rm{RLie}$, then
each $w(L)$-module $A$ is associated to a couple $(\bar{A},f)$ where
$\bar{A}$ is a restricted $L$-module and $f$ a $p$-semi-linear map
such that $l.f(\bar{a})=0$ for all $l \in L$ and $\bar{a} \in
\bar{A}$.
\end{remark}

\subsection{Beck-derivations for \rm{RLie}} Let $g\in RLie/L$ be a restricted Lie
algebra and  $A$ an $w(L)$-module. The group of Beck derivations is
defined as follows
$$Der_{p}(g,A):=\{d\in Der(g,\bar{A}):
d(x^{[p]})=\underbrace{x\cdots x}_{p-1}dx+f(d(x)),\;x\in g\}$$

It is proved in \cite{dokas} the isomorphism
\begin{equation}
Hom_{\rm{RLie}/L}(\emph{g}, L\times_{f}\bar{A})\simeq
Der_{p}(\emph{g},A)
\end{equation}
given by $$f\mapsto pr_{A}f$$ where $pr_{A}$ denotes the canonical projection.

Cartan and Eilenberg in their book \cite{CE} describe a general
context for the definition of (co)-homology groups for various
algebraic structures. In each case is used an appropriate notion of
enveloping algebra and the definition is given in terms of $Ext$ and
$Tor$ functors. In this context Hochschild in \cite{Hoch} and B.
Pareigis in \cite{Pa} defined (co)-homology groups for the category
of restricted Lie algebras. In the paper of Barr and Beck \cite{BB} is given the
relation between cotriple cohomology groups and the cohomology
theories described in the Cartan-Eilenberg context. It is proved
that for the categories of Groups, Associative algebras, Lie
algebras the two theories coincide (considering a shift in
dimension). Also, cohomology as a derived functor of derivations are studied for
several algebraic categories by Barr and Rinehart in \cite{BarRin}. Besides,
D. Quillen in \cite{Quillen2}, \cite{Quillen} develops an axiomatic homotopy theory and
cohomology groups are defined in the context of model categories. Through this development are
defined (co)-homology groups for universal algebras (see section $2$
in \cite{Quillen}). In \cite{dokas} is defined Quillen-Barr-Beck cohomology
for the category of restricted Lie algebras.

\subsection{Quillen-Barr-Beck cohomology for \rm{RLie}}

Let $L\in \rm{RLie}$ be a restricted Lie algebra and $A$ a $w(L)$-module. Let $F: Sets \rightarrow
\rm{RLie}$ be the free functor, left adjoint to the forgetful
functor $U: \rm{RLie} \rightarrow Sets$. The adjunction $(F,U)$ give
rise to a cotriple $\textbf{G}$ in $\rm{RLie/L}$. In \cite{dokas} are
defined cotriple cohomology groups

 $$H_{\textbf{G}}^{n}(L,A):= H^{n}(Der_{p}(\textbf{G}_{*}(L),A))$$

Bellow we recall the definition of a $n$-torsor, for details and terminology we refer the reader to \cite{duskin}, \cite{duskin2} and \cite{Glenn}.

\subsection{Torsors, interpretation of cotriple cohomology}

Duskin in \cite{duskin} gave an interpretation of the cotriple
cohomology in terms of $n$-dimensional torsors generalizing to any
dimension Beck's interpretation of dimension $1$. Let $\rm{E}$ be a
monadic category over Sets and $\mathbf{G}=FU$ the associated
cotriple. The $n$-truncating functor
$$tr^{n}:Simpl(\rm{E})\rightarrow tr^{n}Simpl(\rm{E})$$ from the
category of simplicial objects of $\rm{E}$ to the category of
$n$-truncated simplicial objects admits a right adjoint $$cosk^{n}:
tr^{n}Simpl(\rm{E}) \rightarrow Simpl(\rm{E})$$ called Verdier's
coskeleton functor. The coskeleton functor is constructed by
iterating simplicial kernels. We denote by $Cosk^{n}$ the
composition functor $$Cosk^{n}:=cosk^{n}\circ tr^{n}: Simpl(\rm{E})
\rightarrow Simpl(\rm{E})$$

Let $A \in ab(\rm{E})$ be an abelian group object, then  the simplicial object $K(A,n)$ is defined as the
$(n+1)$-coskeleton of the following  $(n+1)$-truncated simplicial object

\bigskip

\xymatrix{ A^{n+1} \ar@<-7pt>[r]_-{pr_{0}}^{:} \ar@<0.5pt>[r]^-{pr_{n}}
\ar@<9pt>[r]^-{k_{n}}& A  \ar[r] &
\mathbf{1}  \ar[r] &\cdots  \ar[r]&  \mathbf{1}}
where $\mathbf{1}$ denotes the terminal object and $k_{n}=(-1)^{n} \sum_{i=0}^{i=n}(-1)^{i}pr_{i}$ for all $n\geq 1$.

\begin{definition}
Let $X \in \rm{E}$, then a  $K(A,n)$-torsor in $\rm{E}$ over $X$ relative to $U$ is  defined as an augmented over $X$ simplicial object $(X_{.},d_{i},s_{i})$ together with a simplicial morphism $\chi: X_{.} \rightarrow K(A,n)$  such that
\begin{enumerate}
  \item $X_{.} \rightarrow X$ is a $U$-split augmented simplicial object

  \item   $\chi: X_{.} \rightarrow K(A,n)$ is a simplicial morphism such that the following squares are pullbacks, for each $m \geq n$ and all $0 \leq i \leq m$
 \[
      \begin{CD}
X_{m} @>\chi_{m}>> K(A,n)_{m}\\
@VV(d_{0},\cdots ,\hat{d}_{i},\cdots ,d_{m})V @VV(k_{0},\cdots ,\hat{k}_{i},\cdots ,k_{m})V\\
\Lambda^{i}(m-1)(X_{.}) @>>> \Lambda^{i}(m-1)(K(A,n)_{.})
\end{CD}
\]
where $\Lambda^{i}(m-1)$ denotes the $i$-horn simplicial object of the $(m-1)$-truncated simplicial object
  \item the canonical map $X_{.} \rightarrow Cos^{n-1}_{aug}(X_{.})$ is an isomorphism, where $Cosk^{n}_{aug}$ denotes augmented $n$-coskeleton functor.
\end{enumerate}
\end{definition}

If $(X,s_{.},\chi)$  and $(X',s_{.},\chi')$ are two $K(A,n)$-torsors over $X$ then a morphism of $K(A,n)$-torsors is an $X$-map  $f_{.}: X_{.} \rightarrow X'_{.}$  of augmented simplicial objects such that $\chi=\chi'f$. The set of the
connected components of $n$-torsors is denoted by $Tors^{n}(X,A)$. Let $H_{\mathbf{G}}^{*}(X,A)$
be the cotriple cohomology groups, then Duskin in \cite{duskin}
proved the following theorem.

\begin{theorem}
If $X \in \rm{E}$ and $A$ an abelian group object then
there is a bijection between the set $Tors^{n}(X,A)$ of equivalence
classes of $n$-torsors and the $n$-th cotriple cohomology
$H_{\mathbf{G}}^{n}(X,A)$ for $n\geq 1$.
\end{theorem}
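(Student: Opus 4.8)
The plan is to construct explicit mutually inverse maps between $Tors^{n}(X,A)$ and $H^{n}_{\mathbf{G}}(X,A)$, using the cotriple resolution $\mathbf{G}_{\ast}(X)\rightarrow X$ as a universal test object. Recall that this augmented simplicial object is $U$-split, the splitting being furnished by the unit of the adjunction $(F,U)$, and that each $\mathbf{G}_{m}(X)$ is $U$-free, hence $U$-projective. Given an $n$-cocycle in the cochain complex $Der_{p}(\mathbf{G}_{\ast}(X),A)$, the first step is to repackage it, through the adjunctions defining $K(A,n)$ as an iterated coskeleton, as a simplicial morphism $\varphi:\mathbf{G}_{\ast}(X)\rightarrow K(A,n)$: the cocycle identity is precisely the compatibility of $\varphi$ with the faces in degree $n+1$. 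I would then \emph{twist} the truncated resolution, defining in each degree $m\leq n$ an object $X_{m}$ as an appropriate fibre product of $\mathbf{G}_{m}(X)$ with $K(A,n)_{m}$ over the relevant horn objects along $\varphi$ and the canonical projections, and then setting $X_{\ast}:=Cosk^{n}_{aug}$ of this $n$-truncated datum. One checks that $X_{\ast}\rightarrow X$ remains $U$-split (transporting the splitting from $\mathbf{G}_{\ast}(X)$), that the horn squares required of a torsor are pullbacks in degrees $\geq n$ by construction, and that condition $(3)$ holds because $X_{\ast}$ is by definition an augmented $n$-coskeleton; together with the map $\chi$ induced by $\varphi$ this produces a $K(A,n)$-torsor over $X$.

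Conversely, given a $K(A,n)$-torsor $(X_{\ast},s_{\ast},\chi)$ over $X$, a step-by-step lifting against the horn conditions — possible because each $\mathbf{G}_{m}(X)$ is $U$-free and $X_{\ast}\rightarrow X$ is $U$-split — produces an augmented simplicial map $\psi:\mathbf{G}_{\ast}(X)\rightarrow X_{\ast}$ over $X$, unique up to simplicial homotopy by the same projectivity argument. Then $\chi\circ\psi:\mathbf{G}_{\ast}(X)\rightarrow K(A,n)$ is, under the defining adjunction of $K(A,n)$, an $n$-cocycle in $Der_{p}(\mathbf{G}_{\ast}(X),A)$, and its class in $H^{n}_{\mathbf{G}}(X,A)$ is independent of the choice of $\psi$ because homotopic simplicial maps give cohomologous cocycles.

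It then remains to verify that both assignments descend to equivalence classes and that the round trips are identities. A morphism of torsors $f_{\ast}:X_{\ast}\rightarrow X'_{\ast}$ with $\chi=\chi'f_{\ast}$ identifies the lifts $\psi$ and $f_{\ast}\psi$ up to homotopy, whence the associated cocycles are cohomologous, so the torsor-to-cocycle rule is well defined on $Tors^{n}(X,A)$. For the composite that starts from a cocycle $\varphi$, the twisted torsor comes equipped with a structural projection to $\mathbf{G}_{\ast}(X)$ that may be taken as the lift $\psi$, so $\chi\circ\psi$ returns $\varphi$ up to a coboundary; for the composite that starts from a torsor, $\psi$ itself, viewed as an $X$-map of augmented simplicial objects compatible with the classifying maps, is precisely a torsor morphism from the twisted torsor back to the original one, identifying the two in $Tors^{n}(X,A)$.

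The main obstacle is the comparison statement underlying both directions: that $\psi$ exists and is unique up to a homotopy tight enough that $\chi\circ\psi$ is well defined modulo coboundaries, using only $U$-freeness of the $\mathbf{G}_{m}(X)$ and $U$-splitness of the augmentation to solve the Kan-type horn-lifting problems imposed by the torsor axioms. This is an acyclic-models, or cofinality, argument in the simplicial category of $\rm{E}$ — morally the assertion that $K(A,n)$-torsors over $X$ are computed equally well by the canonical resolution $\mathbf{G}_{\ast}(X)$ or by an arbitrary $U$-split hypercover of $X$. Once this is in place, checking that the twist satisfies the torsor axioms and that the coskeleton behaves as required is a routine diagram chase. (This is of course Duskin's theorem; the above only sketches the shape of its proof.)
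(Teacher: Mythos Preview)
The paper does not prove this statement at all: its entire proof is a one-line citation of Theorem~(8.9) in Duskin's memoir \cite{duskin}. So there is no argument in the paper to compare your sketch against; the result is imported wholesale.

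Your outline is a fair summary of the shape of Duskin's argument --- extract a classifying cocycle from a torsor by lifting the cotriple resolution along it (using $U$-projectivity of the $\mathbf{G}_{m}(X)$ against the $U$-split, exactly-fibrant torsor), and conversely build a torsor from a cocycle by a twisting/coskeleton construction --- and you correctly flag the homotopy-uniqueness of the lift as the point where the work lies. Two small remarks. First, the theorem is stated here for an arbitrary monadic category $\rm{E}$ over $\rm{Sets}$, so the cochain complex should be written as $\mathrm{Hom}_{\rm{E}}(\mathbf{G}_{\ast}(X),A)$ (or the appropriate abelian-group-object $\mathrm{Hom}$), not $Der_{p}$; the latter is the specialization to $\rm{E}=\rm{RLie}$ and only enters later in the paper. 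Second, your description of the cocycle-to-torsor direction as forming fibre products of $\mathbf{G}_{m}(X)$ with $K(A,n)_{m}$ in each degree $m\leq n$ is not quite how Duskin proceeds: below degree $n$ the objects $K(A,n)_{m}$ are terminal, so those fibre products are just $\mathbf{G}_{m}(X)$; the actual twist occurs at level $n$ (and is then propagated by the coskeleton), where one factors the simplicial kernel through the cocycle. This does not affect the correctness of the overall strategy, but as written your construction in low degrees is vacuous rather than wrong.
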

\begin{proof}
Theorem (8.9) in \cite{duskin}.
\end{proof}
Later Glenn in \cite{Glenn} defined the notion of $n$-dimensional
hypergroupoids and gave a slightly different definition of
$n$-torsor. If $E$ is a monadic category over $Sets$ whose objects
have an underlying group structure, then by Corollary $7.2.4$ in
\cite{Glenn} the two notions of $n$-torsors coincide. Let $p: B
\rightarrow R$ be a regular  epimorphism in $\rm{E}$ and $B_{*}^{p}$
the simplicial object (with augmentation $p$), obtained by iterating
the simplicial kernel construction. Then, A.M Cegarra and E. R.
Aznar in \cite{CegAz} define (Definition $1.1$) the abelian groups
$$Tor^{0}(p,A):=Hom_{simpl(\rm{E})}({B}_{*}^{p}, K(A,1))$$
and $Tors^{1}(p,A)$ of connected components of $2$-torsors over $R$
with fixed augmentation $p$ (Definition $2.1$). Moreover, it is
proved in \cite[Theorem~$7.3$]{CegAz} that there is an eight term
exact sequence

\begin{multline}
Hom_{\rm{E}}(R,A)\xrightarrow{p^{*}} Hom_{\rm{E}}(B,A)\rightarrow
Tor^{0}(p,A)\rightarrow Tors^{1}(R,A)\rightarrow
Tors^{1}(B,A)\\
\rightarrow Tors^{1}(p,A)\rightarrow Tors^{2}(R,A)\rightarrow
Tors^{2}(B,A).
\end{multline}

\section{A 5-term exact sequence for Quillen-Barr-Beck cohomology}

The Hochschild-Serre spectral sequence for (co)-homology of Lie
algebras gives rise to exact sequences of terms of low degree. In
particular, if $$0\rightarrow N \rightarrow \mathbf{g} \rightarrow
\mathbf{b}\rightarrow 0$$ is an exact sequence of Lie algebras and
if $A$, is a left (resp. right) $\mathbf{b}$-module, then we have
the following exact sequences
$$
0 \rightarrow Der(\mathbf{b},A)\rightarrow Der(\mathbf{g},A)
\rightarrow Hom_{U(\mathbf{g})}(N_{ab}, A)\rightarrow
H^{2}(\mathbf{b},A)\rightarrow H^{2}(\mathbf{g},A)$$ and
$$H_{2}(\mathbf{g},A)\rightarrow
H_{2}(\mathbf{b},A)\rightarrow N_{ab}
\otimes_{U(\mathbf{b})}A\rightarrow H_{1}(\mathbf{g},A) \rightarrow
H_{1}(\mathbf{b},A)\rightarrow 0$$

where $N_{ab}:=N/[N,N]$.

For the case of Hochschild (co)-homology of restricted Lie algebras,
there are analogue sequences. Precisely, if
$$0\rightarrow N\rightarrow \mathbf{g} \rightarrow
\mathbf{b}\rightarrow 0$$ is an exact sequence of restricted Lie
algebras and if $M$ is a right restricted $\mathbf{b}$-module then
B. Eckmann and  U. Stammbach proved in \cite{ES} the existence of
the following  $5$-term exact sequence
$$H_{2}^{Hoch}(\mathbf{g},A)\rightarrow
H_{2}^{Hoch}(\mathbf{b},A)\rightarrow \mathcal{N}_{ab}
\otimes_{u(\mathbf{b})}A\rightarrow  H_{1}^{Hoch}(\mathbf{g},A)
\rightarrow H_{1}^{Hoch}(\mathbf{b},A)\rightarrow 0$$ where
$\mathcal{N}_{ab}:=N/[N,N]'$ and $[N,N]'$
denotes the ideal generated by the elements
$[x,y],z^{[p]}$ where $x,y,z\in N$.

If $N$ is a restricted Lie algebra we denote by
$\mathbf{N}_{ab}:=N/<[N,N]>_{p}$ the quotient restricted Lie
algebra, where  $<[N,N]>_{p}$ is the $p$-ideal generated by the
elements $[x,y]$ where $x,y\in N$. If $p: \textbf{g} \rightarrow L$ is a
restricted Lie epimorphism with kernel $N$ then the $p$-map on $N$
induce an $R_{f}$-action on $\mathbf{N}_{ab}:=N/<[N,N]>_{p}$ given
by $f.(n+<[N,N]>_{p}):=n^{[p]}+<[N,N]>_{p}$. Besides,
$\mathbf{N}_{ab}$ is a $L$-module via the action
$x.(n+<[N,N]>_{p}):=[s(x),n]+<[N,N]>_{p}$ where $s: L\rightarrow \textbf{g}$ denotes a section of $p$ and $x\in L$, $n\in
N$. Moreover,
\begin{align*}
x.(n^{[p]}+<N,N>_{p})&=[s(x),n^{[p]}]+<[N,N]>_{p}\\
                     &=[s(x),\underbrace{n],\cdots n}_{p}]+<[N,N]>_{p}\\
                     &=0
\end{align*}
It follows that, $\mathbf{N}_{ab}$ is equipped with the structure of
a $w(L)$-module.

\begin{lemma}
If $p: \textbf{g} \rightarrow L$ is a restricted Lie epimorphism
with kernel $N$ and $A$ a in $w(L)$-module, then we have an
isomorphism
$$Tor^{0}(p,A) \simeq Hom_{w(L)}(\textbf{N}_{ab},A)$$
\end{lemma}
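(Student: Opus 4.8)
The plan is to unwind the definition $Tor^{0}(p,A)=Hom_{simpl(\rm{RLie}/L)}(B_{*}^{p},K(A,1))$ and identify it by hand with $Hom_{w(L)}(\mathbf{N}_{ab},A)$. Recall that $B_{*}^{p}$ is the \v{C}ech nerve of $p$ in $\rm{RLie}/L$, so $B_{0}=\mathbf{g}$, $B_{1}=\mathbf{g}\times_{L}\mathbf{g}$, $B_{2}=\mathbf{g}\times_{L}\mathbf{g}\times_{L}\mathbf{g}$, with faces given by omitting a coordinate and the obvious degeneracies, and that $K(A,1)$ is a $2$-coskeleton; hence, since $tr^{2}$ is left adjoint to $cosk^{2}$, a simplicial morphism $\chi\colon B_{*}^{p}\to K(A,1)$ is the same as a morphism $tr^{2}B_{*}^{p}\to tr^{2}K(A,1)$ of $2$-truncated simplicial objects. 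I would first spell this out: $\chi_{0}$ must equal the augmentation $p$, the essential datum is a morphism $\chi_{1}\colon\mathbf{g}\times_{L}\mathbf{g}\to A$ in $\rm{RLie}/L$ subject to the normalization $\chi_{1}s_{0}=0$, and the component $\chi_{2}$ (and with it all the simplicial identities in degrees $\le 2$) is then determined; the only genuine constraint surviving from level $2$ is a cocycle-type identity among the composites $\chi_{1}d_{i}$, $i=0,1,2$, which, as I will note below, holds automatically once $\chi_{1}$ is a normalized $k$-linear map.

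Next I would invoke the natural isomorphism $Hom_{\rm{RLie}/L}(C,L\times_{f}\bar{A})\cong Der_{p}(C,A)$ (with $C=\mathbf{g}\times_{L}\mathbf{g}$, and writing the $w(L)$-module $A$ as a pair $(\bar{A},f)$ as above), so that $\chi_{1}$ becomes a Beck derivation $\chi_{1}\in Der_{p}(\mathbf{g}\times_{L}\mathbf{g},A)$. Since Beck derivations are $k$-linear, the normalization $\chi_{1}(x,x)=0$ together with the identity $(a,b)=(b,b)+(a-b,0)$ in $\mathbf{g}\times_{L}\mathbf{g}$ forces $\chi_{1}(a,b)=\psi(a-b)$, where $\psi(n):=\chi_{1}(n,0)$ is a $k$-linear map $N\to\bar{A}$; conversely any $k$-linear $\psi\colon N\to\bar{A}$ gives through this formula a map satisfying the normalization and (by linearity of $\psi$) the level-$2$ cocycle identity. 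So the task becomes: for which $k$-linear $\psi$ does $(a,b)\mapsto\psi(a-b)$ lie in $Der_{p}(\mathbf{g}\times_{L}\mathbf{g},A)$? Substituting $a=b+n$, $a'=b'+n'$ and separating the two defining conditions of $Der_{p}$, I expect the Lie-derivation identity to become equivalent to ``$\psi$ vanishes on $[N,N]$ and $\psi$ is equivariant for the $\mathbf{g}$-action on $N$ (through $p$ and brackets)'', which is precisely the $L$-linearity built into the $w(L)$-module structure on $\mathbf{N}_{ab}$.

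The main obstacle will be the restricted ($p$-map) half of the derivation condition; since $\bar{A}$ is abelian with $p$-map $f$, it reads $\psi\bigl(a^{[p]}-b^{[p]}\bigr)=\underbrace{p(a)\cdots p(a)}_{p-1}\psi(a-b)+f(\psi(a-b))$. Writing $n=a-b\in N$, so $p(a)=p(b)$, the additivity relation for the $p$-map gives $a^{[p]}-b^{[p]}=n^{[p]}+\sum_{i=1}^{p-1}s_{i}(b,n)$, where $i\,s_{i}(b,n)$ is the coefficient of $\lambda^{i-1}$ in $ad_{\lambda b+n}^{p-1}(b)$. The key observation is that $ad_{\lambda b+n}(b)=[b,\lambda b+n]=[b,n]\in N$, so $ad_{\lambda b+n}^{p-1}(b)$ lies in $N[\lambda]$ and $\psi$ may be applied to it; iterating the $\mathbf{g}$-equivariance of $\psi$ (and using $\psi([N,N])=0$) collapses the action of $ad_{\lambda b+n}$ on $N$ to multiplication by $-\lambda\,p(b)$ through the module structure, so that $\psi\bigl(ad_{\lambda b+n}^{p-1}(b)\bigr)$ is a single monomial of degree $p-2$ in $\lambda$. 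Comparing coefficients with $\sum_{i=1}^{p-1}i\,\psi(s_{i}(b,n))\lambda^{i-1}$ then forces $\psi(s_{i}(b,n))=0$ for $i<p-1$ (each such $i$ being invertible mod $p$) and $\psi(s_{p-1}(b,n))=\underbrace{p(b)\cdots p(b)}_{p-1}\psi(n)$ (using $p-1\equiv-1$ and $(-1)^{p-1}=1$ in characteristic $p$); therefore the displayed identity holds exactly when $\psi(n^{[p]})=f(\psi(n))$, i.e. exactly when $\psi$ intertwines the $p$-maps, equivalently is compatible with the $R_{f}$-action on $\mathbf{N}_{ab}$.

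Finally I would put the three resulting conditions on $\psi$ together. They say that $\ker\psi$ is a $p$-ideal of $N$ containing $[N,N]$, hence containing $\langle[N,N]\rangle_{p}$, so $\psi=\bar{\psi}\circ\pi$ factors through $\pi\colon N\to\mathbf{N}_{ab}$, with $\bar{\psi}$ $k$-linear, $L$-equivariant and commuting with $f$, that is, $\bar{\psi}\in Hom_{w(L)}(\mathbf{N}_{ab},A)$ for the $w(L)$-module structure on $\mathbf{N}_{ab}$ described just before the statement. Conversely, given $\bar{\psi}\in Hom_{w(L)}(\mathbf{N}_{ab},A)$, the formula $\chi_{1}(a,b)=\bar{\psi}(\pi(a-b))$ defines a Beck derivation (its two halves being exactly the $L$-linearity and the $R_{f}$-linearity of $\bar{\psi}$), hence a morphism $\chi_{1}$, which, together with the forced $\chi_{2}$, assembles into a simplicial morphism $B_{*}^{p}\to K(A,1)$ --- the remaining simplicial identities being a routine verification. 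These two assignments are mutually inverse and visibly additive, which yields the isomorphism $Tor^{0}(p,A)\cong Hom_{w(L)}(\mathbf{N}_{ab},A)$.
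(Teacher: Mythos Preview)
Your argument is correct and follows the same overall strategy as the paper: reduce $Tor^{0}(p,A)$ to a single Beck derivation on the level-$1$ piece of the \v{C}ech nerve subject to a cocycle/normalization constraint, then translate the two halves of the $Der_{p}$-condition into $w(L)$-linearity of the induced map $\mathbf{N}_{ab}\to A$.

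The one substantive difference is the choice of coordinates. The paper first passes through the restricted-Lie isomorphism $\mathbf{g}\times_{L}\mathbf{g}\cong\mathbf{g}\rtimes N$, $(x,n)\mapsto(x,x+n)$, and works with the derivation $d_{\phi}$ in these semi-direct-product coordinates. This buys a real simplification in the restricted half of the derivation check: since the $p$-map on $\mathbf{g}\rtimes N$ extends that on $N$, one has $(0,n)^{[p]}=(0,n^{[p]})$ outright, and because $(0,n)$ maps to $0\in L$ the identity collapses immediately to $d_{\phi}(0,n^{[p]})=f(d_{\phi}(0,n))$. In your coordinates $(a,b)$ the $p$-map is $(a^{[p]},b^{[p]})$, which forces you through the expansion $a^{[p]}-b^{[p]}=n^{[p]}+\sum_{i} s_{i}(b,n)$ and the coefficient analysis; that analysis is correct (the key inductive step being $\psi(ad_{\lambda b+n}(m))=-\lambda\,p(b)\cdot\psi(m)$ for $m\in N$, which uses both $\psi([N,N])=0$ and the $\mathbf{g}$-equivariance already established), but it is precisely the computation that the paper's coordinate change is designed to avoid.
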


\begin{proof}

Let $\mathbf{g}_{*}$ be the simplicial restricted Lie algebra which is obtained by iterating the
simplicial kernel construction i.e

\bigskip
\xymatrix{\textbf{g}_{*}:\;\;\;\ldots \mathbf{g}\times_{L}\mathbf{g}
\times_{L}\mathbf{g} \ar@<-4.5pt>[r]_-{d_{0}} \ar@<-2pt>[r]
\ar@<1pt>[r]^-{d_{2}}&\mathbf{g} \times_{L}\mathbf{g} \ar@/^-1pc/[l]
\ar@/^-1.5pc/[l] \ar@<-4.5pt>[r]_-{d_{0}} \ar@<1pt>[r]^-{d_{1}}&
\mathbf{g} \ar@/^-1pc/[l] \ar[r]&  L}

If we denote by $\textbf{g}\rtimes N$ the semi-direct product of
$\textbf{g}$ and $N$ in $\rm{RLie}$, we get an isomorphism of
restricted Lie algebras $$\textbf{g}\rtimes N \simeq
\textbf{g}\times_{L}\textbf{g}$$ given by $$(x,n)\mapsto (x,x+n)$$

Therefore we get the following simplicial object

\bigskip

\xymatrix{\textbf{g}_{*}:\;\;\;\ldots \textbf{g}\rtimes N \rtimes N \ar@<-4.5pt>[r]_-{d_{0}} \ar@<-2pt>[r]
\ar@<1pt>[r]^-{d_{2}}& \textbf{g}\rtimes N  \ar@/^-1pc/[l]
\ar@/^-1.5pc/[l] \ar@<-4.5pt>[r]_-{d_{0}} \ar@<1pt>[r]^-{d_{1}}&
\mathbf{g} \ar@/^-1pc/[l] \ar[r]&  L}

where
\begin{align*}
d_{0}(x,n,n'): &=(x,n)\\
d_{1}(x,n,n'): &=(x,n')\\
d_{2}(x,n,n'): &=(x+n,n'-n)
\end{align*}

The abelian  group $Tor^{0}(p,A)$ is defined by
$$Tor^{0}(p,A):=Hom_{simpl(\rm{RLie/L})}(\textbf{g}_{*}, K(L\times_{f}\bar{A},1))$$
and by Lemma $2.1$ in \cite{CegAz} we obtain

$$Tor(p,A)=ker\big(Hom_{\rm{RLie}/L}(\textbf{g}\rtimes N,L\times_{f}\bar{A}) \xrightarrow{d_{0}^{*}-d_{1}^{*}+d_{2}^{*}}
Hom_{\rm{RLie}/L}(\textbf{g}\rtimes N\rtimes
N,L\times_{f}\bar{A})\big)$$

Let $\phi\in Tor(p,A)$ then by isomorphism $(1.4)$ any $\phi\in Tor(p,A)$ is associated to a
Beck derivation $d_{\phi}\in Der_{p}(\textbf{g}\rtimes N,A)$ such
that $d_{\phi}d_{0}-d_{\phi}d_{1}+d_{\phi}d_{2}=0$, i.e

\begin{equation}
d_{\phi}(x,n)-d_{\phi}(x,n')+d_{\phi}(x+n,n'-n)=0
\end{equation}

for all $x\in \textbf{g}$ and $n,n'\in N$. Since $N=ker\,p$ we get
\begin{align*}
d_{\phi}(0,[n,n']) &=d_{\phi}\big([(0,n),(0,n')]\big)\\
                   &=(0,n)d_{\phi}\big((0,n')\big)-(0,n')d_{\phi}\big((0,n)\big)\\
                   &=0
\end{align*}
Besides,
\begin{align*}
d_{\phi}(0,[n,n'])^{[p]}) &= d_{\phi}\big((0,[n,n'])^{[p]}\big)\\
                     &=(0\underbrace{,[n,n'])\cdots (0,}_{p-1}[n,n'])d_{\phi}(0,[n,n'])+f\big(d_{\phi}(0,[n,n'])\big)\\
                     &=0
\end{align*}
Therefore is defined a map $\bar{\phi}: \mathbf{N}_{ab} \rightarrow A$ given
by $$\bar{\phi}(n+<[N,N]>_{p}):=d_{\phi}(0,n)$$ for all $n\in N$.
Moreover for $x\in L$ and $n\in N$ we have
\begin{align*}
\bar{\phi}\big((x.(n+<[N,N]>_{p})\big) &=\bar{\phi}([s(x),n]+<[N,N]>_{p})\\
                            &=d_{\phi}(0,[s(x),n])\\
                            &=d_{\phi}\big([(s(x),0),(0,n)]\big)\\
                            &=(s(x),0)d_{\phi}(0,n)-(0,n)d_{\phi}(s(x),0)\\
                            &=(s(x),0)d_{\phi}(0,n)\\
                            &=x.\bar{\phi}(n+<[N,N]>_{p})
\end{align*}

\begin{align*}
\bar{\phi}\big((n^{[p]}+<[N,N]>_{p})\big)&=d_{\phi}(0,n^{[p]})\\
                                         &=d_{\phi}\big((0,n)^{[p]}\big)\\
                                         &=\underbrace{(0,n)\cdots
                                         (0,n)}_{p-1}d_{\phi}(0,n)+f(d_{\phi}(0,n))\\
                                         &=f(d_{\phi}(0,n))
\end{align*}
It follows that $\bar{\phi}$ is a morphism of $w(L)$-modules.
Conversely, let $\bar{\phi}:Hom_{w(L)}(\textbf{N}_{ab},A) $ be a
$w(L)$-homomorphism. We define $\phi: \textbf{g}\rtimes N\rightarrow
L\times_{f}\bar{A}$ given by
$$\phi(x,n):=\big(p(x),\bar{\phi}(n+<[N,N]>_{p})\big)$$
Then the associated derivation $d_{\phi}(x,n):=\bar{\phi}(n+<[N,N]>_{p})$ satisfies the relation $(2.1)$ and it follows that
$\phi \in Tor^{0}(p,M)$. Therefore the map
$\bar{\phi}\mapsto \phi$ is a well defined homomorphism, inverse to
the homomorphism $\phi \mapsto \bar{\phi}$.
\end{proof}

\begin{theorem}
Let $0\rightarrow N\rightarrow \mathbf{g} \rightarrow
\mathbf{b}\rightarrow 0$  be an exact sequence of restricted Lie
algebras and $A$ a $w(\mathbf{b})$-module. Then the following sequence is
exact
\end{theorem}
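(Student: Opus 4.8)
The plan is to obtain the asserted sequence as the truncation to its first five terms of the Cegarra-Aznar eight term exact sequence $(1.5)$, applied to $p$ and to the coefficient module $A$, after reinterpreting each term. We work inside the category $\rm{RLie}/\mathbf{b}$, which is Barr exact, being a slice of the Barr exact category $\rm{RLie}$, and in which the cotriple cohomology, the torsor theories of Duskin and of Glenn, and the hypotheses of \cite{CegAz} are all available as in \cite{dokas}. View $p$ as the regular epimorphism $(\mathbf{g}\xrightarrow{p}\mathbf{b})\to(\mathbf{b}\xrightarrow{\mathrm{id}}\mathbf{b})$ of $\rm{RLie}/\mathbf{b}$, whose iterated simplicial kernel is the simplicial object $\mathbf{g}_{*}$ occurring in the proof of Lemma $2.1$, and take as abelian coefficient object the Beck $\mathbf{b}$-module $\mathbf{b}\times_{f}\bar A\to\mathbf{b}$ corresponding to the $w(\mathbf{b})$-module $A$. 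Then $(1.5)$ reads
\begin{multline*}
Hom_{\rm{RLie}/\mathbf{b}}(\mathbf{b},\mathbf{b}\times_{f}\bar A)\xrightarrow{p^{*}}Hom_{\rm{RLie}/\mathbf{b}}(\mathbf{g},\mathbf{b}\times_{f}\bar A)\to Tor^{0}(p,A)\to Tors^{1}(\mathbf{b},A)\to\\ Tors^{1}(\mathbf{g},A)\to Tors^{1}(p,A)\to Tors^{2}(\mathbf{b},A)\to Tors^{2}(\mathbf{g},A).
\end{multline*}

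Next I identify the first five terms. By the adjunction isomorphism $(1.4)$ the two $Hom$ groups are $Der_{p}(\mathbf{b},A)$ and $Der_{p}(\mathbf{g},A)$, and the arrow between them is precomposition with $p$. By Lemma $2.1$, $Tor^{0}(p,A)\cong Hom_{w(\mathbf{b})}(\mathbf{N}_{ab},A)$ with $N=\ker p$; under this identification the arrow $Der_{p}(\mathbf{g},A)\to Hom_{w(\mathbf{b})}(\mathbf{N}_{ab},A)$ is the map carrying a Beck derivation to its restriction to $N$, which factors through $\mathbf{N}_{ab}$ and is a $w(\mathbf{b})$-module map by the very computations $d_{\phi}(0,[n,n'])=0$ and $d_{\phi}((0,[n,n'])^{[p]})=0$ (together with the fact that $N$ acts trivially on $A$, $A$ being a $\mathbf{b}$-module) used in the proof of Lemma $2.1$. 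By the bijection $Tors^{n}(X,A)\cong H_{\mathbf{G}}^{n}(X,A)$ of Duskin, valid for $n\ge 1$, the last two displayed terms of the truncation are $H_{\mathbf{G}}^{1}(\mathbf{b},A)$ and $H_{\mathbf{G}}^{1}(\mathbf{g},A)$. Finally $p^{*}$ is injective, since $p$ is surjective and a Beck derivation $d\colon\mathbf{b}\to A$ with $dp=0$ vanishes, so the sequence may be extended by $0$ on the left; this yields the required five term exact sequence
\begin{equation*}
0\to Der_{p}(\mathbf{b},A)\xrightarrow{p^{*}}Der_{p}(\mathbf{g},A)\to Hom_{w(\mathbf{b})}(\mathbf{N}_{ab},A)\to H_{\mathbf{G}}^{1}(\mathbf{b},A)\to H_{\mathbf{G}}^{1}(\mathbf{g},A).
\end{equation*}

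The only points requiring care, and where the bookkeeping will sit, are the verification that the hypotheses of the invoked results hold in $\rm{RLie}/\mathbf{b}$ — Barr exactness of the slice, $U$-splitness of the augmented simplicial objects involved, and the underlying group structure condition in Corollary $7.2.4$ of \cite{Glenn} that makes Duskin's and Glenn's torsors agree — together with the compatibility of the four identifications above with the connecting homomorphisms of $(1.5)$. This last point is essentially already contained in the proof of Lemma $2.1$: the defining morphism $d_{0}^{*}-d_{1}^{*}+d_{2}^{*}$ of $Tor^{0}(p,A)$ and the cocycle identity $(2.1)$ are precisely what is analysed there, so no genuinely new computation is needed. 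I therefore do not expect a substantive obstacle; the mathematical content has already been isolated in Lemma $2.1$ and in the cited results of Duskin \cite{duskin}, Glenn \cite{Glenn} and Cegarra-Aznar \cite{CegAz}.
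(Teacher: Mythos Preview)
Your proposal is correct and follows essentially the same approach as the paper: the paper's proof consists of the single sentence that the result follows from Lemma~$2.1$ together with the Cegarra--Aznar exact sequence~$(1.5)$, using that $\rm{RLie}$ is monadic over $\rm{Sets}$ with underlying group structure so that Duskin's and Glenn's torsors coincide and Theorem~$1.13$ applies. Your write-up simply makes explicit the identifications (via $(1.4)$, Lemma~$2.1$, and Theorem~$1.13$) and the injectivity of $p^{*}$ that the paper leaves implicit.
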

\[0 \rightarrow Der_{p}(\mathbf{b},A)\rightarrow Der_{p}(\mathbf{g},A)
\rightarrow Hom_{w(\mathbf{b})}(\mathbf{N}_{ab}, A)\rightarrow
H_{\textbf{G}}^{1}(\mathbf{b},A)\rightarrow
H_{\textbf{G}}^{1}(\mathbf{g},A)\]

\begin{proof} Since $\rm{RLie}$ is a monadic category over $\rm{Sets}$ whose objects have an underlying group structure, it follows from the above lemma
and the exact sequence $(1.5)$.
\end{proof}

\section{Internal groupoids and crossed modules}

Crossed modules in groups were introduced by Whitehead \cite{Whit}
in the study of relative homotopy groups. Brown and Spencer in \cite{BS} noted that internal categories within
the category of groups are equivalent to crossed modules. In the more
general context of categories of groups with operations, crossed modules
and internal categories are studied by Porter in \cite{Por}.
Moreover, internal categories in a Mal'tsev variety are studied by
Janelidze in \cite{Jane}. Bellow we consider the case of the category of restricted Lie algebras.

Let as recall the definition of internal category in a category
$\mathcal{C}$ with pullbacks. An \textit{internal category} in $\mathcal{C}$  is a
diagram in $\mathcal{C}$

\begin{equation*}
\xymatrix{
C\times_{C_{0}} C \ar@<1pt>[r]^-{\theta} & C \ar@<-4pt>[r]_-{t} \ar@<1pt>[r]^-{s} & C_{0} \ar@<-11pt>[l]_-{e}
}
\end{equation*}

such that $te=se=id_{C_{0}}$ with a morphism
$\theta: C \times_{C_{0}}C\rightarrow C$ where $C \times_{C_{0}} C$
denotes the pullback

\begin{align*}
 \begin{CD}
  C\times_{C_{0}} C @>p_{2}>> C \\
  @Vp_{1}VV  @VsVV \\
  C @>t>> C_{0} \\
 \end{CD}
\end{align*}

satisfying: $t\theta=tp_{2}$ and $s\theta=sp_{1}$,
the associative low relation

\begin{align*}
 \begin{CD}
  C \times_{C_{0}} C\times_{C_{0}} C @> \theta \times id >> C\times_{C_{0}}C \\
  @V id\times \theta VV  @V\theta VV \\
  C\times_{C_{0}} C  @>>\theta> C_{0} \\
 \end{CD}
\end{align*}

the left and right unit lows for composition of morphisms

$$
\xymatrix{ C \ar[rd]^-{id_{C}}
\ar[r]^-{( id_{C},et)} & C\times_{C_{0}} C
\ar[d]^-{\theta}
&C \ar[l]_-{(es, id_{C})} \ar[ld]^-{id_{C}}\\
&C}
$$

The morphisms $t,s,e$ are called the target, source and unit morphisms respectively. An internal category is called \textit{internal groupoid} if for any
$c\in \mathcal{C}$ there is a $c'\in \mathcal{C}$ such that
$\theta(c,c')=es(c)$ and $\theta(c',c)=et(c)$.

J.-L. Loday and C. Kassel define
in \cite{Loday2} the notion of crossed module for the category of
Lie algebras. In the same way are defined crossed modules for the
category of restricted Lie algebras (cf. \cite{dokas1}).

\begin{definition}
Let $\mu: M \rightarrow N$ be a homomorphism of restricted Lie
algebras. The triple $(M,N,\mu)$ is called a crossed module if there
is a restricted homomorphism $\eta: N \rightarrow Der(M)$ such that
$\eta (n)$ is a restricted derivation for all $n \in N$ and the
following relations hold:
\begin{align}
\mu( \eta (n)(m)) &=[n, \mu(m)],\;\; n \in N,\; m \in M \\
\eta (\mu (m))(m')&=[m,m'],\;\; m,m' \in M
\end{align}
\end{definition}
In order to simplify the notion we denote $\eta (n)(m):=n.m$ for all $m\in M$ and $n\in N$.
\begin{example}
Let $L$ be a restricted Lie algebra and $I$ an ideal of $L$. If $i:
I \hookrightarrow L$ denotes the inclusion homomorphism then the
$(I,L,i)$ is a crossed module in $RLie$.
\end{example}

\begin{theorem}
The notion of a crossed module is equivalent to the notion internal groupoid in the category of restricted Lie algebras.
\end{theorem}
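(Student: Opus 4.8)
The plan is to exhibit mutually inverse constructions between crossed modules and internal groupoids in $\rm{RLie}$, mirroring the classical Brown--Spencer equivalence for groups, and checking at each stage that the extra $p$-operation behaves correctly. Given a crossed module $\mu\colon M\rightarrow N$ with action $\eta$, I would set $C_{0}:=N$ and $C:=N\rtimes M$, the semi-direct product in $\rm{RLie}$ (which exists by Jacobson's theorem, as recalled in the excerpt), with source $s(n,m):=n$, target $t(n,m):=n+\mu(m)$, and unit $e(n):=(n,0)$. The composable pairs are $(n,m),(n',m')$ with $n+\mu(m)=n'$, and one defines $\theta\big((n,m),(n+\mu(m),m')\big):=(n,m+m')$. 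The relations $te=se=\mathrm{id}$, $t\theta=tp_{2}$, $s\theta=sp_{1}$, associativity, and the unit laws are then exactly the group-case identities restricted to the Lie bracket; the groupoid inverse of $(n,m)$ is $(n+\mu(m),-m)$. What must be verified beyond the Lie-algebra computation is that $s,t,e,\theta$ are \emph{restricted} homomorphisms: $s$ and $t$ are restricted because the $p$-map on $N\rtimes M$ is $(n,m)^{[p]}=(n^{[p]},\underbrace{n\cdots n}_{p-1}m+m^{[p]})$ and $t$ of this is $n^{[p]}+\mu(\underbrace{n\cdots n}_{p-1}m+m^{[p]})$, which by repeated use of the crossed-module identity (3.1) and relation (2) in the definition of restricted Lie algebra equals $(n+\mu(m))^{[p]}$; compatibility of $\theta$ with the $p$-maps on the pullback $C\times_{C_{0}}C$ (itself a restricted Lie algebra) follows similarly.

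Conversely, given an internal groupoid $(C\rightrightarrows C_{0},e,\theta)$ in $\rm{RLie}$, I would set $N:=C_{0}$, $M:=\ker s$ (a restricted ideal of $C$, hence a restricted Lie algebra), $\mu:=t|_{M}\colon M\rightarrow N$, and define the action by $\eta(n)(m):=e(n)+m-e(n)$ computed via the bracket, i.e. $n.m:=[e(n),m]$ (this lands in $M$ since $s$ is a homomorphism and $s(e(n))=n$ while $s(m)=0$, and one uses that $e(C_{0})$ normalises $\ker s$). One checks (3.1) from $t$ being a homomorphism with $te=\mathrm{id}$, and (3.2) from the interchange law for $\theta$: the standard argument shows $[m,m']=\theta$-difference $= \mu(m).m'$ after identifying, via $\theta$ and the unit laws, a general element of $C$ with $e(s(c))+(c-e(s(c)))$. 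The restrictedness of $\eta(n)$ as a derivation, $\eta(n)(m^{[p]})=\mathrm{ad}^{p-1}(m)(\eta(n)(m))$, follows from relation (2) applied to the bracket $[e(n),m]$ inside $C$.

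The two constructions are mutually inverse: starting from $\mu\colon M\rightarrow N$, forming $C=N\rtimes M$ and then taking $\ker s=\{(0,m)\}\cong M$ with $t|_{\ker s}(0,m)=\mu(m)$ and $[(n,0),(0,m)]=(0,\eta(n)m)$ recovers the original data; and starting from a groupoid, the map $C\to C_{0}\rtimes\ker s$, $c\mapsto (s(c),\,c-e(s(c)))$, is an isomorphism of internal groupoids — here one must check it respects the $p$-maps, which reduces again to the formula for the $p$-map on a semi-direct product together with the fact that $c-e(s(c))\in\ker s$ and $s$ is restricted. Finally I would remark that morphisms correspond under these assignments, giving an equivalence of categories.

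The main obstacle I anticipate is bookkeeping the $p$-map throughout: none of the Lie-theoretic identities are new, but at every point where the classical proof uses a group-theoretic conjugation or product one must re-derive the analogue for the bracket \emph{and} separately confirm compatibility with $(-)^{[p]}$, using relations (1)--(3) in the definition of restricted Lie algebra, the crossed-module identities (3.1)--(3.2), and the explicit $p$-map on semi-direct products recalled before Section~1.2. In particular the verification that $t\colon N\rtimes M\to N$ is restricted, i.e. that $n^{[p]}+\mu\big(\underbrace{n\cdots n}_{p-1}m+m^{[p]}\big)=(n+\mu(m))^{[p]}$, is the one computation I would write out in full, since it is where the interaction of $\mu$, the action, and the additivity formula (3) for the $p$-map all come together.
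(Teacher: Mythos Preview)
Your proposal follows the same strategy as the paper: both directions of the correspondence are constructed identically (semi-direct product $N\rtimes M$ with $s(n,m)=n$, $t(n,m)=n+\mu(m)$, $e(n)=(n,0)$, $\theta$ as you describe; and conversely $M:=\ker s$, $\mu:=t|_{M}$, $n.m:=[e(n),m]$), and the crossed-module axioms are recovered exactly as you outline.

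One small correction: the formula you write for the $p$-map on the semi-direct product,
\[
(n,m)^{[p]}=\bigl(n^{[p]},\underbrace{n\cdots n}_{p-1}m+m^{[p]}\bigr),
\]
is the special formula for $L\times_{f}A$ with $A$ an \emph{abelian} module (Section~1.2). For a general crossed module $M$ is not abelian, and the $p$-map on $N\rtimes M$ is only specified by Jacobson's theorem as the unique extension of the $p$-maps on the two factors; there is no such closed formula. Consequently your proposed ``full computation'' for $t$ cannot proceed as written. The paper sidesteps this by checking restrictedness on the Lie generators $(0,n)$ and $(m,0)$ separately and then using that a Lie homomorphism which preserves the $p$-map on a generating set is restricted; it carries this out for $\theta$ (the least obvious case) rather than for $t$. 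If you adopt the same device, the verification for $t$ becomes immediate: $t(n,0)^{[p]}=n^{[p]}=t(n^{[p]},0)$ and $t(0,m)^{[p]}=\mu(m)^{[p]}=\mu(m^{[p]})=t(0,m^{[p]})$.
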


\begin{proof}
Let $(M,N,\mu)$ be a crossed module in $\rm{RLie}$ then we associate to it the following diagram in $\rm{RLie}$

\begin{equation*}
\xymatrix{
 (M\rtimes N) \times_{N} (M\rtimes N) \ar[r]^-{\theta} & (M\rtimes N)  \ar@<-8pt>[r]_-{t} \ar@<-1pt>[r]^-{s} & N \ar@<-8pt>[l]_-{e}
}
\end{equation*}

where $s,t : M\rtimes N \rightarrow N$ are restricted Lie homomorphisms given by
$$s(m,n):=n$$

$$t(m,n):=n+\mu(m)$$ and $\sigma: N \rightarrow
M\rtimes N$ by $$e(n):=(0,n)$$ for all $m\in M$ and $ n\in
N$. The multiplication
 $$\theta: (M\rtimes N) \times_{N}
(M\rtimes N) \rightarrow M\rtimes N$$
is given by
$$\theta((m,n),(m',n+\mu(m))=(m+m',n)$$
A straightforward calculation shows that $\theta$ is a Lie algebra homomorphism. Moreover we note
that
\[
\begin{aligned}
\theta(((m,0),(0,\mu(m))^{[p]}) &=\theta((m,0)^{[p]},(0,\mu(m))^{[p]})\\
                                &=\theta((m^{[p]},0),(0,\mu(m)^{[p]}))\\
                                &=\theta((m^{[p]},0),(0,\mu(m^{[p]}))\\
                                &=(m^{[p]},0)\\
                                & =(\theta((m,0),(0,\mu(m)))^{[p]}
\end{aligned}
\]
and in the same way we see that
$$\theta(((0,n),(0,n))^{[p]})=(\theta((0,n),(0,n)))^{[p]}$$ and
$$\theta(((0,0),(m',0))^{[p]})=(\theta((0,0),(m',0)))^{[p]}$$

 Since $\theta$ is a Lie algebra homomorphism we deduce that is actually a
restricted Lie homomorphism. Then one can easily check that the conditions of associativity and unit and right lows are satisfied. Besides, if we define $$(m,n)':=(-m,n+\mu(m))$$ for all $m\in M$ and $n\in N$  then we see that the above diagram in $\rm{RLie}$ defines an internal groupoid in $\rm{RLie}$.

Let
\begin{equation*}
\xymatrix{
N'\times_{N} N'\ar@<1pt>[r]^-{\theta} & N'  \ar@<-4pt>[r]_-{t} \ar@<1pt>[r]^-{s} & N \ar@<-11pt>[l]_-{e}
}
\end{equation*}

be an internal groupoid in  $\rm{RLie}$ with multiplication $\theta$. Then
we associate to it a crossed module the $(M,N,\mu)$ where $M:=Ker\,
s$ and $\mu:=t\mid_{M}$ and with action $\eta: N \rightarrow Der(M)$
given by
$$\eta (n)(m)=[e(n),m]$$

for all $n \in N$ and $m\in M$. In effect,
\[
\begin{aligned}
\eta (n^{[p]})(m) &=[e(n^{[p]}),m]\\
                  &=[(e(n))^{[p]},m]\\
                  &=(\eta(n))^{p}(m)
                  \end{aligned}
\]
and
\[
\begin{aligned}
\eta(n)(m^{[p]}) &=[e(n),m^{[p]}]\\
                 &=ad^{p-1}(m)(\eta(n)(m))
\end{aligned}
\]

Since $te=se=id\mid_{N}$ we get
$$\theta(m+e(n),(m'+e(n+\mu(m))=\theta(m+e(n),e
t(m+e(n)))+\theta(e s(m'),m') $$

Besides by unities properties of the groupoid we have that
$$\theta ((m+e(n), et(m+e(n)))+\theta(e s(m'),m')=m+e(n)+m'$$

Thus we obtain
$$\theta((m+e(n)),(m'+e(n+\mu(m))=m+m'+e(n)$$

since $\theta$ is a Lie algebra homomorphism we have
$$\theta([(m,m'+e\mu(m)),(0,m'')])=[\theta(m,m'+e\mu(m)),\theta(0,m'')]$$
and
$$[e\mu(m),m'']+[m',m'']=[m,m'']+[m',m'']$$

We deduce that
\[
\begin{aligned}
 \eta (\mu(m)) &=[e(\mu(m),m'']\\
               & =[m,m'']
               \end{aligned}
               \]
Finally we have,
\[
\begin{aligned}
\mu(\eta (n)(m)) &=b([e(n),m])\\
         &=[te(n),t(m)]\\
         &=[n,\mu(m)]
\end{aligned}
\]
\end{proof}

\section{The second cohomology group and $2$-fold extensions}

Gerstenhaber in \cite{Ger} studies $2$-fold extensions in certain categories of interest including the category of Lie algebras. Also, the case of Lie algebras is studied by Shimada-Uehara-Brenneman-Iwai in \cite{Shi}. Besides, J.-L. Loday and C. Kassel in \cite{Loday2} consider two fold extensions of Lie algebras associated to a crosssed module. In this section we study $2$-fold extensions in the category of restricted Lie algebras.

Let $(M,N,\mu)$ be a crossed module in $\rm{RLie}$ and
$(A,[p_{A}]):=ker\,\mu$. By relation $3.2$ of Definition
$3.1$ we get that $\mu(a).m =[a,m]=0$ for all $a\in A$ and $m\in M$. Thus $A\subset \mathcal{C}(M)$ where $\mathcal{C}(M)$ denotes the
center of $M$. Moreover by relation $3.1$ of Definition $3.1$ we
have that $\mu(n.a)=[n,\mu(a)]=0$ thus $n.a \in A$ for all
$n\in N$ and $a\in A$.

The restriction on $A$ of the action of $N$ on $M$ endows $A$ with
the structure of restricted $N$-module. Moreover we have
\[
\begin{aligned}
n.a^{[p_{A}]} &=ad^{p-1}(a)(n.a)\\
              &=0\\
\end{aligned}
\]
thus the couple $(A,p_{A})$ is a Beck $N$-module. Besides
$\mu(n.m)=[n,\mu(m)]$ for all $n \in N$ and $m\in M$, thus we obtain that
$Img\,(\mu)$ is a restricted ideal of $N$. Since
\[
\begin{aligned}
(n+\mu(m)).a &=n.a+[m,a]\\
             &=n.a+0
\end{aligned}
\]
one sees that $A$ is also a restricted
$R$-module where $R:=coker\,\mu$. Therefore $(A,p_{A})$ becomes a
Beck $R$-module.

\subsection{2-fold extensions}

Let $(A,p_{A})$ be a Beck $R$-module. We denote by $E^{2}(R,A)$ the
category whose objects are exact sequences in $\rm{RLie}$

$$
0 \rightarrow A \rightarrow M \xrightarrow{\mu} \ N \rightarrow R
\rightarrow 0 $$ such that $(M,N,\mu)$ is a crossed module and the
induced Beck $R$-module structure is the given one. The morphisms
are:

\[
\begin{CD}
0 @>>> A @>>> M @>>> N @>>> R @>>> 0\\
  @.  @VVidV @VVfV @VVgV @VVidV\\
0 @>>> A @>>> M' @>>> N' @>>> R @>>> 0
\end{CD}
\]

where the morphisms $f$ and $g$ respect the actions. Two objects
$E_{1},E_{2} \in E^{2}(R,A)$ are called equivalent if there is a
morphism in $E^{2}(R,A)$ from one to the other. We will denote by
$\mathcal{E}^{2}(R,A)$ the set of equivalence classes in
$E^{2}(R,A)$.

For $p: E\rightarrow R$ a fixed epimorphism in $\rm{RLie}$ we
consider the set $E^{2}(p,A)$ the set of 2-term extensions
$$
0 \rightarrow A \rightarrow M \xrightarrow{\mu} \ N \xrightarrow{p}
R \rightarrow 0 $$ and morphisms

\[
\begin{CD}
0 @>>> A @>>> M @>>> N @>p>> R @>>> 0\\
  @.  @VVidV @VVfV @VVidV @VVidV\\
0 @>>> A @>>> M' @>>> N @>p>> R @>>> 0
\end{CD}
\]
Two objects $E_{1},E_{2} \in E^{2}(p,A)$ are called equivalent if
there is a morphism from one to the other. We denote by $\mathcal{E}^{1}(p,A)$
the set of equivalence classes.

\subsection{Baer sum} The set $E^{2}(R,A)$ can be endowed with the structure of abelian group. The Baer sum of two restricted Lie algebra extensions is induced from their Baer sum viewed as Lie algebra extensions. In particular, let  $E$ and $E'$ be $2$-fold extensions of restricted Lie algebras

$$(E):\;\;\;
0 \rightarrow A \rightarrow M \xrightarrow{\mu} \ N \rightarrow R
\rightarrow 0 $$

and

$$(E'):\;\;\;
0 \rightarrow A \rightarrow M' \xrightarrow{\mu'} \ N' \rightarrow R
\rightarrow 0 $$

The Baer sum of $(E)$ and $(E')$ is defined as the extension

$$(E+E'):\;\;\;
0 \rightarrow (A\times A)/K \rightarrow (M\times M')/K \xrightarrow \ N\times_{R}N' \rightarrow R
\rightarrow 0 $$

where $K:=ker\,f$ and $f: A\times A\rightarrow A$ given by $f(a,a'):=a+a'$ for all $a,a'\in A$. The restricted ideal $K$ is consisting of the elements $(a,-a)$.

Next we give an interpretation of $2$-torsors over a restricted Lie algebra $R$ in terms of $2$-fold extensions. We note that $2$-torsors relative to $U$ in $\rm{RLie}$ are $2$-torsors relative to $U$ in $\rm{Lie}$ viewed as simplicial objects in $\rm{Lie}$.

\begin{lemma}
Let $R \in \rm{RLie}$ be a restricted Lie algebra and $(A,p_{A})$ a
Beck $R$-module. Then we have
$$Tors^{2}(R,A)\simeq\mathcal{E}^{2}(R,A)$$
and
$$Tors^{1}(p,A)\simeq \mathcal{E}^{1}(p,A)$$
\end{lemma}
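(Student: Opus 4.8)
The plan is to reduce to the Lie-algebra case by means of the observation made just above --- a simplicial object of $\rm{RLie}$ is a $2$-torsor relative to $U$ precisely when its underlying simplicial Lie algebra is a $2$-torsor relative to $U$ in $\rm{Lie}$ --- and then to use Theorem $3.3$ to pass between internal groupoids and crossed modules, the only genuinely restricted issue being the behaviour of the $p$-map. Concretely, since $\rm{RLie}$ is monadic over $\rm{Sets}$ with group-like objects, Glenn's theory \cite{Glenn} applies: a $K(A,2)$-torsor $(X_{\bullet},\chi)$ over $R$ relative to $U$ is, by condition $(3)$ of the definition of a torsor, determined by its $2$-truncation, and the horn-pullback conditions of condition $(2)$ force the diagram $X_{2}\rightrightarrows X_{1}\rightrightarrows X_{0}$ to be the nerve of an internal groupoid in $\rm{RLie}$ with $X_{0}$ as its object of objects and with augmentation $X_{0}\to R$, while $\chi$ records the identification of the top kernel datum with $A$.

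By Theorem $3.3$ this internal groupoid corresponds to a crossed module $(M,N,\mu)$ in $\rm{RLie}$, where (as in the proof of Theorem $3.3$) $N=X_{0}$, $M=ker\,s$ and $\mu=t|_{M}$; the augmentation yields $coker\,\mu\cong R$ and $\chi$ yields $ker\,\mu\cong A$. By the discussion opening Section $4$, $ker\,\mu$ is central in $M$ and carries a canonical structure of Beck $coker\,\mu$-module, its induced $p$-map being precisely the $p$-semi-linear map $f$ of Theorem $1.9$; one checks this is the prescribed Beck $R$-module structure. We thereby obtain an exact sequence $0\to A\to M\xrightarrow{\mu}N\to R\to 0$ defining a class in $\mathcal{E}^{2}(R,A)$, and a morphism of torsors induces a morphism of the associated $2$-fold extensions, so this gives a well-defined map $Tors^{2}(R,A)\to\mathcal{E}^{2}(R,A)$. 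For the inverse, begin with $0\to A\to M\xrightarrow{\mu}N\to R\to 0$ in $E^{2}(R,A)$; Theorem $3.3$ converts $(M,N,\mu)$ into an internal groupoid in $\rm{RLie}$, whose nerve --- augmented over $R=coker\,\mu$ and then replaced by its $2$-coskeleton --- comes equipped with a canonical classifying map to $K(A,2)$ recording $ker\,\mu\cong A$. That this is a $K(A,2)$-torsor may be checked after forgetting the $p$-maps, where it reduces to Glenn's description of $2$-torsors by $2$-dimensional hypergroupoids, all the structure maps remaining restricted; and the two assignments are plainly mutually inverse modulo equivalence. Finally, the bijection is additive, since the Baer sum on $E^{2}(R,A)$ defined above and the group law on $Tors^{2}(R,A)$ are both compatible with passage to underlying Lie algebras, so additivity follows from the Lie-algebra case.

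For the relative isomorphism $Tors^{1}(p,A)\simeq\mathcal{E}^{1}(p,A)$ I would repeat the argument verbatim, but with the augmentation of the torsor fixed to the given epimorphism $p\colon E\to R$: by \cite{CegAz} the $2$-torsors over $R$ with fixed augmentation $p$ correspond, under Theorem $3.3$, to crossed modules $(M,E,\mu)$ equipped with the identification $coker\,\mu=R$ induced by $p$ --- equivalently, to exact sequences $0\to A\to M\xrightarrow{\mu}E\xrightarrow{p}R\to 0$ --- and a morphism of such torsors is forced to restrict to the identity on $E$ and on $R$, so the resulting category is precisely $E^{2}(p,A)$. I expect the principal obstacle to lie in the top-dimensional matching: verifying that the horn-pullback and coskeletal conditions translate exactly into the two crossed-module axioms $(3.1)$ and $(3.2)$, and --- the point special to the restricted setting --- that the classifying map $\chi$ encodes the extra vanishing $n.a^{[p_{A}]}=0$, i.e.\ that giving $\chi$ amounts to choosing the $p$-semi-linear map $f$ that upgrades $A$ from a restricted $R$-module to a $w(R)$-module. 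The checks of associativity, of the unit laws, and of compatibility of the various $p$-maps are routine once Theorem $3.3$ is available.
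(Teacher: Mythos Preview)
Your strategy is essentially the paper's: pass between torsors and crossed modules via Theorem~3.3, reduce the purely simplicial checks to the Lie-algebra case (citing \cite{Vale}), and argue additivity by noting that both the torsor product and the Baer sum are inherited from $\rm{Lie}$. Two points of comparison are worth recording. First, in the forward direction the paper does not invoke Glenn's hypergroupoid description to extract an internal groupoid from the $2$-truncation; instead it reads off the $2$-fold extension directly as the Moore complex $M(E_\bullet)$, using the pullback condition~(2) of Definition~1.12 to identify $\ker d_0\cap\ker d_1$ with $A$. This is equivalent to your route but shorter, since it avoids reconstructing the groupoid only to pass back to its kernel data. Second, the ``principal obstacle'' you correctly isolate --- that the classifying map $\chi$ (the paper's $\delta_2$) must be a \emph{restricted} homomorphism, not merely a Lie homomorphism --- is precisely the computation the paper carries out in detail: one writes $\delta_2\big((\mu(z_1)+e_0,x_1),(e_0,y_1),(e_0,z_1)\big)=(x_1+z_1-y_1,\,p(e_0))$ and checks compatibility with the $p$-map using Remark~1.2 and relation~(3.2). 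You should not leave this as an expected difficulty; it is the only step that does not reduce to \cite{Vale}, and without it the inverse map $\mathcal{E}^{2}(R,A)\to Tors^{2}(R,A)$ is not actually defined in $\rm{RLie}$.
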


\begin{proof}
It suffices to construct maps from $Tors^{2}(R,A)$ to
$\mathcal{E}^{2}(R,A)$ inverse to each other. Let $(E.)$ be a
$2$-torsor over $R$ with simplicial morphism $$\epsilon_{.}: E_{.} \rightarrow K(A\times_{p_{A}} R,2)$$

 We consider the Moore complex $M(E.)$ of $(E.)$ given by $M(E.)_{0}=R$ and $M(E.)_{n}=\cap_{i}^{n}ker\,d_{i}$, for all $i\geq 1$ and with differential $\delta_{n}:=d_{0}|_{ M(E.)_{n}}$. By conditions $(1)$ and $(2)$ of the Definition $1.12$ the associated Moore complex $M(E.)$ is given by
the following exact sequence in $\rm{RLie}$

\[M(E.):\; 0 \rightarrow Ker\,d_{1}\cap \ker\,d_{0} \rightarrow ker\,d_{1} \xrightarrow{d_{0}} E_{0} \xrightarrow{p} R
\rightarrow 0 \]

It follows from condition $(2)$ of the Definition $1.12$ that $$E_{2}\simeq (A\times_{p_{A}} R)\times_R  \Lambda^{2}(1)(E_{.})$$
where $$ \Lambda^{2}(1)(E_{.})=\{(x_{0},x_{1})\in E_{1}\times E_{1}, d_{0}(x_{0})=d_{0}(x_{1})\}$$

We have the following commutative diagram

\[
\xymatrix{
E_{2} \ar@/_/[ddr]_{<d_{0},d_{1}>} \ar@/^/[drr]^{\epsilon_{2}}
\ar@{>}[dr]|-{\simeq} \\
& (A\times_{p_{A}} R)\times_R  \Lambda^{2}(1)(E_{.})\ar[d] \ar[r]
& A\times_{p_{A}} R \ar[d]_{pr_{R}} \\
& \Lambda^{2}(1)(E_{.}) \ar[r] & R }
\]

Therefore we obtain  $$Ker\,d_{1}\cap \ker\,d_{0} =(A,p_{A})$$

Since $2$-torsors in $\rm{RLie}$ are $2$-torsors in $\rm{Lie}$, it follows from \cite{Vale} that
the Lie action of $R$ on $A$ coincides with the Lie action induced from the above exact sequence. Moreover we have
\[
\begin{aligned}
r^{[p]}.a &=[s(r)^{[p]},a]\\
          &=[\underbrace{s(r)[s(r)[\cdots[s(r)}_{p},a]]]]\\
          &=(\underbrace{r.(r(\cdots (r}_{p}.a))))
          \end{aligned}
          \]
 where $s$ denotes a section of the surjection $p: E_{0}
\rightarrow R$. Therefore the induced Beck
$R$-module structure by the two-term exact sequence, coincides with
the initial structure and we obtain a two term exact sequence in $\rm{RLie}$

\[ 0 \rightarrow A \rightarrow Ker\,d_{1} \xrightarrow{d_{0}} E_{0} \xrightarrow{p} R
\rightarrow 0 \]

Conversely, let
$$0 \rightarrow A \rightarrow M \xrightarrow{\mu} E_{0} \xrightarrow{p} R \rightarrow 0 $$
be a two-term extension in $\mathcal{E}^{2}(R,A)$. By Theorem $3.3$ is associated an augmented over $R$ groupoid $\Gamma$:

\begin{equation*}
\xymatrix{
  E_{1}:=(E_{0}\rtimes M)  \ar@<-8pt>[r]_-{t} \ar@<-1pt>[r]^-{s} & E_{0} \ar@<-8pt>[l]_-{e} \ar@<1pt>[r]^{p}& R \ar@<1pt>[r] &0
}
\end{equation*}

where $s(e_{0},m):=e_{0}$, $t(e_{0},m):=\mu(m)+e_{0}$  and $e(e_{0}):=(0,e_{0})$ for all
$e_{0} \in E_{0}$ and $m \in M$. Let  $E_{.}:=cosk^{1}(\Gamma)$ be the $1$-coskeleton of $\Gamma$. If $$\Big((\mu(z_{1})+e_{0},x_{1}),(e_{0},y_{1}),(e_{0},z_{1})\Big) \in E_{2}$$ where $x_{1},y_{1},z_{1} \in M$ and $e_{0}\in E_{0}$ then $x_{1}+z_{1}-y_{1} \in A$. As in the case of Lie algebras is defined a Lie algebra homomorphism  $\delta_{2}: E_{2}\rightarrow K(A\times_{p_{A}} R,2)$ given by
$$\delta_{2}\Big((\mu(z_{1})+e_{0},x_{1}),(e_{0},y_{1}),(e_{0},z_{1})\Big)=(x_{1}+z_{1}-y_{1},p(e_{0}))$$

We notice that  $z_{1}+x_{1}=y_{1}+a$ for some $a\in A$, thus the Lie algebra $L_{(z_{1}+x_{1}),y_{1}}$ generated by  $(z_{1}+x_{1})$ and $y_{1}$ is zero. By Remark $1.2$ we obtain
\[
\begin{aligned}
(z_{1}+x_{1}-y_{1})^{[p]} &=(z_{1}+x_{1})^{[p]}-y_{1}^{[p]}+\sum_{i=1}^{i=p}s_{i}\big((z_{1}+x_{1}),y_{1}\big)\\
                          &=(z_{1}+x_{1})^{[p]}-y_{1}^{[p]}\\
                          &=z_{1}^{[p]}+x_{1}^{[p]}+\sum_{i=0}^{i=1}s_{i}(z_{1},x_{1})-y_{1}^{[p]}
\end{aligned}
\]
By Relation $(3.2)$  we have $\mu(z_{1}).x_{1}=[z_{1},x_{1}]$ thus

\[
\begin{aligned}
\delta_{2}\bigg(\Big((\mu(z_{1}),x_{1}),(0,y_{1}),(0,z_{1})\Big)^{[p]}\bigg)&=\delta_{2}\bigg(\Big((\mu(z_{1}),x_{1})^{[p]},(0,y_{1})^{[p]},(0,z_{1})^{[p]}\Big)\\
                                                                            &=\delta_{2}\bigg(\Big((\mu(z_{1})^{[p]},0)+(0,x_{1}^{[p]})+\sum_{i=1}^{i=p}
                                                                            s_{i}\big((\mu (z_{1}),0),(0,x_{1})\big)\Big),(0,y_{1}^{[p]}),(0,z_{1}^{[p]})\bigg)\\
                                                                            &=\delta_{2}\bigg(\Big(\mu(z_{1}^{[p]}),x_{1}^{[p]}+\sum_{i=1}^{i=p}
                                                                            s_{i}(z_{1},x_{1})\Big),(0,y_{1}^{[p]}),(0,z_{1}^{[p]})\bigg)\\
                                                                            &=(x_{1}^{[p]}+\sum_{i=1}^{i=p}s_{i}(z_{1},x_{1})+z_{1}^{[p]}-y_{1}^{[p]},0)\\
                                                                            &=\delta_{2}\Big((\mu(z_{1}),x_{1}),(0,y_{1}),(0,z_{1})\Big)^{[p]}
\end{aligned}
\]

Also, $p$ is a restricted Lie homomorphism so
$$\delta_{2}\bigg(\Big((e_{0},0),(e_{0},0),(e_{0},0)\Big)^{[p]}\bigg)=\delta_{2}\Big((e_{0},0),(e_{0},0),(e_{0},0)\Big)^{[p]}$$
Since $\delta_{2}$ is a Lie algebra homomorphism we see that  $\delta_{2}$ is actually a restricted Lie homomorphism. Besides, as for the case of Lie algebras
(see \cite{Vale})  $\delta$ is a normalized cocycle. It follows from Section $4.1$ in \cite{duskin} that $E_{.}$ is a $K(A\times_{p_{A}} R,2)$ torsor over $R$. Also, one can see that the Moore complex $M(E_{.})$ of $E_{.}$ is the exact sequence
$$0 \rightarrow A \rightarrow M \xrightarrow{\mu} E_{0} \xrightarrow{p} R \rightarrow 0 $$

Consequently, we defined maps from $$Tors^{2}(R,A) \rightarrow
E^{2}(R,A)$$ and $$E^{2}(R,A) \rightarrow Tors^{2}(R,A)$$ which are
inverse to each other. Moreover we can see that equivalent two-term
extensions correspond to the same connected component in the
category of $2$-torsors and conversely elements in the same
component correspond to equivalent two-term extensions.

Besides, by definition of the group structure on the set of torsors (see \cite{Glenn}, \cite{duskin2}), the group structure on $2$-torsors $RLie$ is induced from the group structure on $2$-torsors in $Lie$. Moreover, the Baer sum of two restricted Lie algebra extensions is induced from their Baer sum viewed as Lie algebra extensions. It follows that above bijections are actually group isomorphisms. Therefore the theorem follows.
\end{proof}

\begin{theorem}
Let $R \in \rm{RLie}$ be a restricted Lie algebra and $(A,p_{A})$ a
Beck $R$-module. Then there is an isomorphism
$$H^{2}_{\mathbf{G}}(R,A)\simeq \mathcal{E}^{2}(R,A)$$
\end{theorem}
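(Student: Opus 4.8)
The plan is to chain together the two equivalences already established. First I would recall that Duskin's Theorem 1.13 gives a bijection $\mathrm{Tors}^{2}(R,A)\simeq H^{2}_{\mathbf{G}}(R,A)$ for the monadic category $\rm{RLie}$ over $\rm{Sets}$, and since the objects of $\rm{RLie}$ carry an underlying group structure, Glenn's Corollary 7.2.4 ensures that the two notions of $2$-torsor coincide, so this bijection is available with either definition. Next I would invoke Lemma 4.2, which supplies a bijection $\mathrm{Tors}^{2}(R,A)\simeq\mathcal{E}^{2}(R,A)$. Composing these yields a bijection $H^{2}_{\mathbf{G}}(R,A)\simeq\mathcal{E}^{2}(R,A)$, so the only real content is to check that this composite is a homomorphism of abelian groups.

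Accordingly, the key steps in order are: (i) state that $H^{2}_{\mathbf{G}}(R,A)$ is an abelian group and that $\mathrm{Tors}^{2}(R,A)$ is an abelian group under the Baer-type sum described in Section 4.2 (this is the group structure on torsors of Duskin--Glenn); (ii) recall that Duskin's correspondence $\mathrm{Tors}^{2}(R,A)\simeq H^{2}_{\mathbf{G}}(R,A)$ is compatible with these group structures, so it is a group isomorphism; (iii) invoke the last paragraph of the proof of Lemma 4.2, where it is already observed that the bijection $\mathrm{Tors}^{2}(R,A)\simeq\mathcal{E}^{2}(R,A)$ carries the torsor group structure to the Baer sum on $\mathcal{E}^{2}(R,A)$, hence is a group isomorphism; (iv) conclude that the composite $H^{2}_{\mathbf{G}}(R,A)\xrightarrow{\sim}\mathrm{Tors}^{2}(R,A)\xrightarrow{\sim}\mathcal{E}^{2}(R,A)$ is an isomorphism of abelian groups. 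One should also note in passing that the $R$-module (indeed Beck $R$-module) appearing in the torsor description, namely $A\times_{p_{A}}R$ over $R$, matches the Beck $R$-module data $(A,p_{A})$ underlying the $2$-fold extension, which was checked in the course of Lemma 4.2.

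The main obstacle, and the only point requiring any care, is the compatibility of the group structures — i.e.\ that the bijections in question are not merely set maps but group homomorphisms. For Duskin's correspondence this is part of his construction of the addition on $\mathrm{Tors}^{n}$ so that it matches the cotriple cohomology addition; for Lemma 4.2 it was dispatched at the end of that proof by the remark that both the torsor addition and the extension addition are induced from the corresponding operations in $\rm{Lie}$ (the Baer sum of restricted Lie algebra $2$-fold extensions being the Baer sum of the underlying Lie algebra extensions, equipped with the induced $p$-map). So the proof is essentially a one-line assembly: by Lemma 4.2 and Theorem 1.13 (using Glenn's coincidence of the two torsor notions in a monadic category over $\rm{Sets}$ with underlying group structure), one has group isomorphisms $H^{2}_{\mathbf{G}}(R,A)\simeq\mathrm{Tors}^{2}(R,A)\simeq\mathcal{E}^{2}(R,A)$, and composing gives the claim.
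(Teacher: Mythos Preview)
Your proposal is correct and follows essentially the same route as the paper: the paper's proof is literally the one-line assembly you describe, invoking the preceding Lemma (which the paper numbers 4.1, though you call it 4.2) for $\mathrm{Tors}^{2}(R,A)\simeq\mathcal{E}^{2}(R,A)$ together with Duskin's Theorem~1.13 for $\mathrm{Tors}^{2}(R,A)\simeq H^{2}_{\mathbf{G}}(R,A)$. Your additional remarks on Glenn's coincidence of torsor notions and on the compatibility of group structures are not extra content but simply make explicit points the paper already handled in the surrounding discussion and at the end of the Lemma's proof.
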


\begin{proof}
The theorem follows from the above Lemma $4.1$ and the Theorem
$1.13$.
\end{proof}

\begin{remark}
We note that in the Cartan-Eilenberg context crossed modules in
various algebraic categories are associated with the third
cohomology group. In the Quillen-Barr-Beck context crossed modules
are associated to the second cohomology group since there is a shift
by 1 in the notation. Besides, G. Hochschild in \cite{Hoch2} gives an interpretation of the third Hochschild cohomology in terms of space of restricted kernel classes.
\end{remark}

\subsection{Eight term exact sequence}

The five term exact sequence of Theorem $2.2$ for Quillen-Barr-Beck
cohomology for restricted Lie algebras can be extended to an eight
term exact sequence by the following theorem.

\begin{theorem}
Let $0\rightarrow N\rightarrow \mathbf{g} \xrightarrow{p}
\mathbf{b}\rightarrow 0$  be an exact sequence of restricted Lie
algebras and $M$ an $w(\mathbf{b})$-module. Then the following sequence is
exact
\begin{multline*}
0 \rightarrow Der_{p}(\mathbf{b},M)\rightarrow Der_{p}(\mathbf{g},M)
\rightarrow Hom_{w(\mathbf{b})}(\mathbf{N}_{ab}, M)\rightarrow
H_{\textbf{G}}^{1}(\mathbf{b},M)\rightarrow
H_{\textbf{G}}^{1}(\mathbf{g},M) \\
\rightarrow \mathcal{E}^{1}(p,M)\rightarrow H^{2}_{\mathbf{G}}(\mathbf{b},M)\rightarrow
H^{2}_{\mathbf{G}}(\mathbf{b},M)
\end{multline*}
\end{theorem}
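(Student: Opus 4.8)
The plan is to obtain the sequence by specializing the Cegarra--Aznar eight term exact sequence $(1.5)$ to the epimorphism $p\colon\mathbf{g}\to\mathbf{b}$ and then rewriting each of its terms in Quillen--Barr--Beck language by means of the identifications already available in the paper. Since $\rm{RLie}$ is monadic over $\rm{Sets}$ with objects carrying an underlying abelian group structure (Remark $1.3$), it is Barr exact and Glenn's and Duskin's notions of $n$-torsor coincide; hence \cite{CegAz} applies in the slice $\rm{RLie}/\mathbf{b}$ to the epimorphism $p$ with coefficients the abelian group object $\mathbf{b}\times_{f}\bar{M}$ attached to the $w(\mathbf{b})$-module $M$. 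In this case $(1.5)$ reads
\begin{multline*}
Hom_{\rm{RLie}/\mathbf{b}}(\mathbf{b},\mathbf{b}\times_{f}\bar{M})\xrightarrow{p^{*}}
Hom_{\rm{RLie}/\mathbf{b}}(\mathbf{g},\mathbf{b}\times_{f}\bar{M})\to\\
Tor^{0}(p,M)\to Tors^{1}(\mathbf{b},M)\to Tors^{1}(\mathbf{g},M)\to\\
Tors^{1}(p,M)\to Tors^{2}(\mathbf{b},M)\to Tors^{2}(\mathbf{g},M).
\end{multline*}

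Next I would substitute the known identifications of these terms. By the isomorphism $(1.4)$ the first two $Hom$ groups are $Der_{p}(\mathbf{b},M)$ and $Der_{p}(\mathbf{g},M)$, the latter formed with respect to the $w(\mathbf{g})$-module structure obtained by restriction along $p$; here one should observe that $Der_{p}(\mathbf{g},M)$, and more generally the Quillen--Barr--Beck cohomology of $\mathbf{g}$, is the same whether computed in $\rm{RLie}/\mathbf{b}$ or in $\rm{RLie}/\mathbf{g}$, since the cotriple resolution of $\mathbf{g}$ and the coefficient module do not depend on the chosen base. By Lemma $2.1$ we have $Tor^{0}(p,M)\simeq Hom_{w(\mathbf{b})}(\mathbf{N}_{ab},M)$. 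By Duskin's Theorem $1.13$ we have $Tors^{1}(\mathbf{b},M)\simeq H^{1}_{\mathbf{G}}(\mathbf{b},M)$, $Tors^{1}(\mathbf{g},M)\simeq H^{1}_{\mathbf{G}}(\mathbf{g},M)$, $Tors^{2}(\mathbf{b},M)\simeq H^{2}_{\mathbf{G}}(\mathbf{b},M)$ and $Tors^{2}(\mathbf{g},M)\simeq H^{2}_{\mathbf{G}}(\mathbf{g},M)$ (for the two degree $2$ terms one may instead combine Lemma $4.1$ with Theorem $4.2$), and by Lemma $4.1$ we have $Tors^{1}(p,M)\simeq\mathcal{E}^{1}(p,M)$. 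Inserting all of this into the display yields precisely the seven arrows of the statement, while exactness at its left end --- the injectivity of $Der_{p}(\mathbf{b},M)\to Der_{p}(\mathbf{g},M)$ --- is automatic, because $p$ is an epimorphism and hence a Beck derivation $d\colon\mathbf{b}\to M$ with $dp=0$ vanishes.

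The one point still to be verified is that these isomorphisms intertwine the connecting homomorphisms of $(1.5)$ with the evident maps on the Quillen--Barr--Beck and $2$-fold extension side, so that the rewritten sequence is still a complex and still exact. For the initial maps this is transparent from $(1.4)$ and the proof of Lemma $2.1$: $p^{*}$ becomes precomposition of Beck derivations with $p$, and the map $Hom_{\rm{RLie}/\mathbf{b}}(\mathbf{g},\mathbf{b}\times_{f}\bar{M})\to Tor^{0}(p,M)$ becomes the map $Der_{p}(\mathbf{g},M)\to Hom_{w(\mathbf{b})}(\mathbf{N}_{ab},M)$ that restricts a Beck derivation to $N$ and passes to the quotient $\mathbf{N}_{ab}$. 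For the remaining maps --- pullback of torsors along $p$, the boundary maps, and forgetting the rigidification of the augmentation $p$ in a $2$-fold extension --- each is induced by an evident natural construction on the underlying simplicial objects, so the naturality of $(1.4)$, Lemma $2.1$, Lemma $4.1$ and Duskin's Theorem $1.13$ supplies the required commuting squares. I expect this compatibility bookkeeping, rather than any genuinely new idea, to be the only delicate point of the proof: the mathematical substance of the statement is carried entirely by Cegarra--Aznar's sequence $(1.5)$ together with Lemmas $2.1$ and $4.1$.
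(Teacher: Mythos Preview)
Your proposal is correct and follows essentially the same route as the paper: the paper's proof says only that the statement ``follows from Theorem $4.2$ and the eight term exact sequence $(1.5)$'', and what you have written is a careful unpacking of exactly that---applying $(1.5)$ to $p$ in $\rm{RLie}/\mathbf{b}$ and then identifying each term via $(1.4)$, Lemma $2.1$, Theorem $1.13$ and Lemma $4.1$. Your additional remarks on the compatibility of the maps and the injectivity at the left end are details the paper leaves implicit.
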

\begin{proof}
It follows from the Theorem $4.2$ and the eight term exact sequence
$(1.5)$.
\end{proof}

\subsection{Application to extensions of algebraic groups} Let $k$ be algebraic closed field of prime characteristic and $G$ an algebraic group over $k$.
 Since $char\;k=p$ we have that the associated Lie algebra $Lie(\rm{G})$ is actually a restricted Lie algebra. In fact in this way is defined a functor
 $Lie: \mathcal{G}r\rightarrow RLie$ from the category of algebraic groups to the
category of restricted Lie algebras (see \cite{Bor}).

 Let $H, A$ be algebraic groups then M. Rosenlicht in \cite{Ros} and J.-P. Serre in \cite{Ser} define as an extension of $H$ by $A$ a short exact sequence of groups
$$0\rightarrow A \xrightarrow{\alpha} G \xrightarrow{\gamma} H\rightarrow 0\;\;\; (E)$$ such that $\alpha,\gamma$ are separable rational homomorphisms. Thus $H$ can be identified with a normal algebraic subgroup of $G$ and $A$ with $G/A$.

Let $$0\rightarrow A \xrightarrow{\alpha'} G' \xrightarrow{\gamma'} H\rightarrow 0\;\;\;(E')$$ be an other extension of $H$ by $A$ then
$(E), (E')$ are called equivalent if there is a rational homomorphism $\psi: G\rightarrow G'$ such that the following diagram is commutative

\[
\begin{CD}
0 @>>> A @>>> G @>>> H @>>> 0 \\
  @.  @VVidV @VV\psi V @VVidV\\
0 @>>> A @>>> G' @>>> H @>>> 0
\end{CD}
\]

The class of equivalent extensions of $H$ by $A$ is denoted by $Ext(H,A)$.

\begin{proposition}
 Let $0\rightarrow H \rightarrow G \xrightarrow{\gamma}  A \rightarrow 0$ be an exact sequence of algebraic groups and $M$ an $w(Lie(A))$-module. Then the following sequence is exact
\begin{multline*}
0 \rightarrow Der_{p}(Lie(A),M)\rightarrow Der_{p}(Lie(G),M)
\rightarrow Hom_{w(Lie(A))}(\mathbf{Lie(H)}_{ab}, M)\rightarrow \\
\rightarrow H_{\textbf{G}}^{1}(Lie(A),M)\rightarrow
H_{\textbf{G}}^{1}(Lie(G),M)
\rightarrow \mathcal{E}^{1}(Lie(\gamma),M) \rightarrow \\
 \rightarrow H^{2}_{\mathbf{G}}(Lie(A),M)\rightarrow
H^{2}_{\mathbf{G}}(Lie(A),M)
\end{multline*}

\end{proposition}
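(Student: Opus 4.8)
The plan is to deduce the proposition from Theorem $4.4$ by applying the functor $Lie : \mathcal{G}r \rightarrow \rm{RLie}$ to the given extension of algebraic groups. So the first step is to check that $Lie$ carries the short exact sequence $0\rightarrow H \rightarrow G \xrightarrow{\gamma} A \rightarrow 0$ of algebraic groups to a short exact sequence of restricted Lie algebras
$$0\rightarrow Lie(H) \rightarrow Lie(G) \xrightarrow{Lie(\gamma)} Lie(A) \rightarrow 0.$$
Here the $p$-operation on each term is the restricted structure coming from the Frobenius on the algebra of distributions (cf. \cite{Bor}), and $Lie(\gamma)$ is a restricted homomorphism because $\gamma$ is a homomorphism of algebraic groups. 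The inclusion $Lie(H)\hookrightarrow Lie(G)$ identifies $Lie(H)$ with a $p$-ideal of $Lie(G)$ since $H$ is a closed normal subgroup of $G$.

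The content of this first step is the exactness of $Lie$ on the sequence, which is exactly where the standing hypotheses are used. Since $\gamma$ is, by definition of an extension in the sense of Rosenlicht and Serre, a separable rational homomorphism, its differential $d\gamma = Lie(\gamma)$ is surjective; and since $H$ is smooth (the ground field being perfect, indeed algebraically closed, of characteristic $p$) and equal to $\ker\gamma$, one gets $Lie(H) = \ker\,Lie(\gamma)$ by a dimension count. Thus the displayed sequence is short exact in $\rm{RLie}$. I would emphasize here that in positive characteristic $Lie$ is only left exact in general and can destroy surjectivity (the Frobenius isogeny is the standard example), so the appeal to separability of $\gamma$ is essential; this is the one place in the argument that requires genuine care rather than bookkeeping.

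With the short exact sequence of restricted Lie algebras established, I would apply Theorem $4.4$ verbatim with $\mathbf{g} := Lie(G)$, $\mathbf{b} := Lie(A)$, $N := Lie(H)$, $p := Lie(\gamma)$, and with $M$ the given $w(Lie(A))$-module. Its conclusion is precisely the asserted eight-term exact sequence, once one reads $\mathbf{N}_{ab}$ as $\mathbf{Lie(H)}_{ab} = Lie(H)/\langle[Lie(H),Lie(H)]\rangle_{p}$, which carries the $w(Lie(A))$-module structure described before Lemma $2.1$, and $\mathcal{E}^{1}(p,M)$ as $\mathcal{E}^{1}(Lie(\gamma),M)$. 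No further computation is needed: the proposition is an immediate corollary of Theorem $4.4$ applied to the restricted Lie algebra extension produced in the first two steps.
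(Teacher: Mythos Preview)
Your proposal is correct and follows essentially the same approach as the paper: establish that the functor $Lie$ sends the given extension of algebraic groups to a short exact sequence of restricted Lie algebras, then invoke Theorem~4.4. The paper simply cites \cite{Ros} for the exactness of the induced Lie algebra sequence, whereas you unpack this step by appealing to separability of $\gamma$ and a dimension count; this added detail (and the remark about the Frobenius isogeny) is helpful but does not change the structure of the argument.
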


 \begin{proof}
 If $0\rightarrow H \rightarrow G \xrightarrow{\gamma}  A \rightarrow 0$ is an exact sequence of algebraic groups
 then $0\rightarrow Lie(H) \rightarrow Lie(G) \xrightarrow{Lie(\gamma)} Lie(A) \rightarrow 0$ is an exact sequence of
 Lie algebras (see \cite{Ros}). Since $k$ is a field of characteristic of $p$ the induced Lie algebra exact sequence is
 actually a restricted Lie algebras sequence. Therefore the proof follows from the Theorem $4.4$ above.
\end{proof}

 If $0\rightarrow H \rightarrow G \xrightarrow{\gamma}  A\rightarrow 0$ is an exact sequence of commutative algebraic groups and $B$ a commutative algebraic group, then J.-P. Serre in \cite{Ser} and M. Rosenlicht in \cite{Ros} proved that there is an exact an exact sequence
\begin{multline*}
0\rightarrow Hom(A,B) \rightarrow Hom(G,B) \rightarrow Hom(H,B)\rightarrow\\
\rightarrow Ext(A,B) \rightarrow Ext(G,B)\rightarrow Ext(H,B)
\end{multline*}

Since $0\rightarrow H \rightarrow G \xrightarrow{\gamma}  A\rightarrow 0$ is an exact sequence of commutative algebraic groups it is induced an exact sequence of abelian restricted Lie algebras $0\rightarrow Lie(H) \rightarrow Lie(G) \xrightarrow{Lie(\gamma)}  Lie(A) \rightarrow 0$. Then $Lie(B)$ becomes a $w(Lie(A))$-module when $Lie(A)$ acts trivially and  $R_{f}$  acts via $fb:=b^{[p]}$ for all $b\in Lie(B)$. Thus we get an exact sequence

\begin{multline*}
0 \rightarrow Hom_{RLie}(Lie(A), Lie(B))\rightarrow Hom_{RLie}(Lie(G),Lie(B))
\rightarrow Hom_{RLie}(Lie(H),Lie(B))\rightarrow \\
\rightarrow H_{\textbf{G}}^{1}(Lie(A),Lie(B))\rightarrow
H_{\textbf{G}}^{1}(Lie(G),Lie(B))
\rightarrow \mathcal{E}^{1}(Lie(\gamma),Lie(B)) \rightarrow\\
 \rightarrow H^{2}_{\mathbf{G}}(Lie(A),Lie(B))\rightarrow
H^{2}_{\mathbf{G}}(Lie(A),Lie(B))
\end{multline*}

\bibliographystyle{plain}

\end{document}